\documentclass[1p,number,sort&compress]{elsarticle}
\usepackage{amsmath, mathrsfs, amssymb, enumerate, txfonts, color}
 
\numberwithin{equation}{section}
\allowdisplaybreaks
\newtheorem{theorem}{Theorem}[section]
\newtheorem{lemma}[theorem]{Lemma}
\newtheorem{cor}[theorem]{Corollary}

\newdefinition{definition}[theorem]{Definition}
\newdefinition{remark}[theorem]{Remark}
\newdefinition{example}[theorem]{Example}
\newproof{proof}{Proof}
\newcommand{\wconv}{\rightharpoonup}
\newcommand{\wsconv}{\mathrel{\vbox{\offinterlineskip\ialign{\hfil##\hfil\cr $\hspace{-0.4ex}\textnormal{\scriptsize{*}}$\cr \noalign{\kern-0.6ex}  $\rightharpoonup$\cr}}}}
\newcommand*\diff{\mathop{}\!\mathrm{d}}
\begin{document}
\begin{frontmatter}
\title{Nonlinear evolution equations with\\
exponentially decaying memory:\\
Existence via time discretisation, uniqueness, and stability\footnote{This work has been supported by Deutsche Forschungsgemeinschaft through Collaborative Research Center 910
``Control of self-organizing nonlinear systems: Theoretical methods and concepts of application''.
}}
\author{Andr\'e Eikmeier}
\ead{eikmeier@math.tu-berlin.de}
\author{Etienne Emmrich\corref{ee}}
\cortext[ee]{Corresponding author}
\ead{emmrich@math.tu-berlin.de}
\author{Hans-Christian Kreusler}
\address{Technische Universit{\"a}t Berlin, Institut f\"ur Mathematik, Stra{\ss}e des 17.~Juni 136, 10623 Berlin, Germany}
\ead{kreusler@math.tu-berlin.de}

\begin{abstract}
The initial value problem for an evolution equation of type $v' + Av + BKv = f$ is studied, where $A:V_A \to V_A'$ is a monotone, coercive operator and where $B:V_B \to V_B'$ induces an inner product. The Banach space $V_A$ is not required to be embedded in $V_B$ or vice versa.
The operator $K$ incorporates a Volterra integral operator in time of convolution type with an exponentially decaying kernel. Existence of a global-in-time solution is shown by proving convergence of a suitable time discretisation. Moreover, uniqueness as well as stability results are proved. Appropriate integration-by-parts formulae are a key ingredient for the analysis.
\end{abstract}
\begin{keyword}
Nonlinear evolution equation \sep monotone operator \sep Volterra operator \sep exponentially decaying memory \sep existence \sep uniqueness \sep stability \sep time discretisation \sep convergence
\MSC[2010]{47J35, 45K05, 34K30, 35K90, 35R09, 65J08, 65M12}
\end{keyword}
\end{frontmatter}
\section{Introduction}
\subsection{Problem statement and main result}

\noindent
We are concerned with initial value problems of the type
\begin{subequations}
\label{abstract1}
\begin{align}\label{abstract1a}
v' + Av + BKv &= f \quad\text{in } (0,T) , \\
v(0) &= v_0 ,
\end{align}
where
\begin{equation}\label{Volterra}
(Kv)(t) = u_0 + \int_0^t k(t-s) v(s) \diff s , \quad
k(z) = \lambda e^{-\lambda z} .
\end{equation}
\end{subequations}
Here, $T>0$ is the time under consideration, $\lambda > 0$ is a given parameter, and $u_0$, $v_0$, $f$ are given data of the problem.

The operator $A : V_A \to V_A'$ is  a hemicontinuous, monotone, coercive operator satisfying a certain growth condition, where $V_A$ denotes a real, reflexive Banach space. The operator $B : V_B \to V_B'$
is a linear, symmetric, bounded, strongly positive operator
on a real Hilbert space $V_B$. We assume that both $V_A$ and $V_B$ are densely and continuously embedded in a real Hilbert space $H$, which is identified with its dual. We emphasise that we do not require $V_A$ to be embedded in $V_B$ (or vice versa) but only assume that
$V=V_A\cap V_B$ is densely embedded in $V_A$ as well as in $V_B$. This yields the scale
\begin{equation}\label{embed}
V_A\cap V_B=V\subset V_C \subset H = H' \subset V_C'\subset V'= V_A'+V_B',\quad C\in \{A,B\},
\end{equation}
of Banach and Hilbert spaces with dense and continuous embeddings. Moreover, we assume that $V$ is separable.

Since the kernel $k$ in (\ref{Volterra}) is assumed to be of exponential type, (\ref{abstract1a}) can easily be derived from the system
\begin{subequations}\label{system}
\begin{align}
v' + Av + Cu &= f , \\
(u-u_0)' + \lambda (u-u_0)  &=\lambda Dv .
\end{align}
\end{subequations}
with suitable linear operators $C, D$ such that $B=CD$ (and with $u_0$ being replaced by $Du_0$).

Such systems appear, e.g., in the description of viscoelastic fluid
flow (see, e.g., Desch, Grimmer, and Schappacher~\cite{Desch88}, MacCamy~\cite{MacCamy77b}) as well as heat flow in materials with memory (see, e.g., MacCamy~\cite{MacCamy77a}, Miller~\cite{Miller78}).
%
%
%

Other applications arise in, e.g., non-Fickian diffusion models describing diffusion processes of a penetrant through a viscoelastic material (see, e.g., Edwards~\cite{Edwards96}, Edwards and Cohen~\cite{EdwardsCohen95}, Shaw and Whiteman~\cite{ShawWhiteman98}), where, apart from the usual diffusion rate of the penetrant, the change in the internal structure of the viscoelastic material has to be considered. Non-Fickian diffusion also plays a role in, e.g., mathematical biology (see, e.g., Cushing~\cite{Cushing}, Fedotov and Iomin~\cite{FedotovIomin08}, Mehrabian and Abousleiman~\cite{MehrabianAbousleiman11}).

Note that all the results obtained in this paper are also valid for kernels of the type $k(z) = c e^{- \lambda z}$ ($ c, \lambda > 0 $). The proof only differs for the stability with respect to perturbations of the kernel. However, in many applications, kernels of the type given in (\ref{Volterra}) naturally arise.
In such applications, one often deals with a coupled system of the type (\ref{system}), where $1/\lambda$ can  be interpreted as a relaxation or averaged delay time. If $\lambda$ tends to $0$ then the system decouples with $u(t) \equiv u_0$. The limit $\lambda\to \infty$ results in a first order equation for $v$  without memory.
In contrast to this, in the case where $k(z)=c e^{- \lambda z}$, the limit $\lambda\to 0$ results in an evolution equation for $u$ of second order in time (see Emmrich and Thalhammer~\cite{EmmrichThalhammer1}), whereas $\lambda \to \infty$ results again in a first order evolution equation for $v$.

Our goal in this paper is twofold: On the one hand, we wish to prove existence of generalised solutions to (\ref{abstract1}). On the other hand, we wish to prove convergence of a simple time discretisation method, which relies on the implicit Euler or Rothe method combined with a product quadrature. Moreover, we prove stability of the solution against perturbations of the problem data (including perturbations of $\lambda$), which also implies uniqueness.

\subsection{Literature overview}

\noindent
Most of the results on evolutionary Volterra integral equations available so far are, indeed, more general
with respect to the integral kernel (often dealing with memory of positive type). Our results, however, include the case of different domains of definition of the underlying operators involved without assuming that $V_A$ is embedded in $V_B$ (or vice versa). Such a situation occurs, e.g., if $A$ is a spatial differential operator of order lower than that of $B$.
This has, to the best knowledge of the authors, not yet been studied. Dealing with memory of exponentially decaying type can thus be seen as a first step within this more general functional analytic framework.

Well-posedness for linear evolutionary integral equations in Banach or Hilbert spaces has been studied in detail by many authors for a long time. We only refer to the standard monographs
Gripenberg et al.~\cite{Gripen} and Pr\"u{\ss}~\cite{Pruess}, and the references cited therein. For the semigroup approach, we also refer to
B\'{a}tkai and Piazzera \cite{Batkai}. For the finite dimensional case (including nonlinear problems), one may also consult Corduneanu~\cite{Corduneanu}.

Whereas there are many results available for linear problems, there is less known for the nonlinear case. Classical results for nonlinear problems are due to Cl\'{e}ment, Crandall, Dafermos, Desch, Gripenberg, Londen, MacCamy, Nohel, and others. One of the equations that is mostly studied is of the type
\begin{equation*}
v' + a\ast Av = f ,
\end{equation*}
where $a$ is a suitable kernel function (e.g., completely monotone or of positive type) and $A$ is a suitable nonlinear operator satisfying certain monotonicity assumptions, see, e.g., Dafermos and Nohel~\cite{DafermosNohel}, Londen~\cite{Londen78}, Gripenberg~\cite{Gripenberg78,Gripenberg79,Gripenberg94}, Webb~\cite{Webb76, Webb77, Webb78}.

For example, MacCamy and Wong~\cite{MacCamyWong} consider a class of nonlinear integro-differential equations with completely monotone kernel in a Hilbert space setting. Gajewski~et al.~\cite{GGZ} study rather general classes of nonlinear evolution equations with Volterra operators but are restricted to the Hilbert space case $V_A=V_B$.
Crandall et al.~\cite{CrandallLondenNohel} consider a doubly nonlinear problem governed by maximal monotone subdifferential operators, where the domain of definition of one of the operators is dense and continuously embedded into the domain of definition of the other operator, see also Crandall and Nohel~\cite{CrandallNohel}.
An interesting result for a class of doubly nonlinear integro-differential equations governed by an $m$-dissipative operator can be found in Grimmer and Zeman~\cite{GrimmerZeman}. The nonlocality in time here is of a special form incorporating this $m$-dissipative operator again.
The domain of definition of the principal part of the equations, however, equals the domain of definition of the nonlocality in time. The method of proof relies upon nonlinear semigroup theory.
More recent results can be found, e.g., in Barbu et al. \cite{Barbu00}, Grasselli and Lorenzi \cite{Grasselli}, Cannarsa and Sforza \cite{Cannarsa03}, Gilardi and Stefanelli \cite{Gilardi06, Gilardi07, Stefanelli04}, Zacher \cite{Zacher08,Zacher09} and Bonnacorsi and Desch \cite{Desch13}.
The above cited references do not cover the class of problems we consider here.

The question of numerical approximation, mostly for linear or semilinear problems with memory of positive type, has been dealt with by McLean, Sloan, Thom\'{e}e, Wahlbin (see, e.g., \cite{SloanThomee86, McLeanThomee93,Lubichetal96,LarssonThomeeWahlbin98}).
The focus is on Galerkin finite element methods combined with suitable time discretisation methods  based on the backward Euler scheme (see, e.g., Chen, Thom\'ee, Wahlbin \cite{Chen92}, Chen and Shih \cite{Chen98}).
Time discretisation methods have also been studied by Calvo, Lubich and Palencia (see, e.g., \cite{Calvo07,Cuesta04}) and, in particular,
convolution quadrature by Lubich \cite{Lubich88a, Lubich88b, Lubich04} and Ferreira \cite{Ferreira13}.
Indeed, our proof of existence also relies upon the convergence of a numerical scheme that is based on a convolution quadrature.

\subsection{Organisation of the paper}

\noindent
The paper is organised as follows: The general notation is explained in Section~2. In Section~3, we state the main assumptions on the operators $A$ and $B$ and collect some preliminary results on their properties.
The main existence result (Theorem~\ref{MainThm}) is provided in
Section~4 by showing (weak) convergence of a suitable time discretisation to (\ref{abstract1}). The crucial point here is an integration-by-parts formula given in Lemma~\ref{PartInt}. In the last section, we show uniqueness and stability with respect to perturbations of the problem data. In particular, we also consider perturbations of the kernel parameter $\lambda$.

\section{Notation}

\noindent
For a Banach space $X$, we denote its norm by $\|\cdot\|_X$, its dual by $X'$, equipped with the standard norm $\|\cdot\|_{X'}$, and the duality pairing by $\langle\cdot, \cdot\rangle$. We recall that $X'$ is reflexive and separable if $X$ does so. For a Hilbert space $X$, we denote the inner product (inducing the norm $\|\cdot\|_X$) by $(\cdot,\cdot)_X$.
The intersection of two Banach spaces $X, Y$ is equipped with the norm $\|\cdot\|_{X\cap Y} = \|\cdot\|_X + \|\cdot\|_Y$, whereas the sum $X+Y$ is equipped with the norm
\begin{equation*}
\|g\|_{X+Y}= \inf\left\{ \max\left(\|g_X\|_X, \|g_Y\|_Y\right) : g = g_X + g_Y \text{ with } g_X \in X , \, g_Y \in Y\right\}\! .
\end{equation*}
We recall that $(X\cap Y)' = X' + Y'$, see also Gajewski~et~al.~\cite[pp.~12ff.]{GGZ}.

For a real, reflexive, separable Banach space $X$, the Bochner--Lebesgue spaces $L^r(0,T;X)$ ($r \in [1,\infty]$) are defined in the usual way and equipped with the standard norm.
Denoting by $r' = r/(r-1)$ the conjugate of $r\in (1,\infty)$ with $r'=\infty$ if $r = 1$, we have that $\left(L^r(0,T;X)\right)' = L^{r'}(0,T;X')$ if $r \in [1,\infty)$;
the duality pairing is given by
\begin{equation*}
\langle g , v \rangle = \int_0^T \langle g(t) , v(t) \rangle \diff t  ,
\end{equation*}
see, e.g., Diestel and Uhl~\cite[Thm.~1 on p.~98, Coroll.~13 on p.~76, Thm.~1 on p.~79]{DiestelUhl}.
Moreover, $L^r(0,T;X)$ is reflexive if $r\in (1,\infty)$ (see \cite[Coroll.~2 on p.~100]{DiestelUhl}) and $L^1(0,T;X)$ is separable.

By $W^{1,r}(0,T;X)$ ($r \in [1,\infty]$), we denote the Banach space of
functions $u\in L^r(0,T;X)$ whose distributional time derivative $u'$ is again in $L^r(0,T;X)$; the space is equipped with the standard norm. Note that if $u\in W^{1,1}(0,T;X)$ then $u$ equals almost everywhere
a function that is in $\mathscr{AC}([0,T];X)$, i.e., a function that is absolutely continuous on $[0,T]$ as a function taking values in $X$. Moreover, $W^{1,1}(0,T;X)$ is continuously
embedded in the Banach space $\mathscr{C}([0,T];X)$ of functions that are
continuous on $[0,T]$ as functions with values in $X$ (see, e.g.,
Roub\'{\i}\v{c}ek~\cite[Chapter~7]{Roubicek} for more details).
By $\mathscr{C}_w([0,T];X)$, we denote the space of functions that are continuous on $[0,T]$ with respect to the weak topology in $X$.

The space of infinitely many times differentiable real functions with compact support in $(0,T)$ is denoted by $\mathscr{C}_c^\infty(0,T)$. The space of on $[0,T]$ continuously differentiable real functions is denoted by $\mathscr{C}^1([0,T])$.
By $c$, we denote a generic positive constant. We set $\sum_{j=1}^0 x_j = 0$ for $x_j$ whatsoever.

\section{Main assumptions and preliminary results}\label{Sec2}

\noindent
Let $V_A$ be a real, reflexive Banach space and $V_B$ and $H$ be real Hilbert spaces satisfying (\ref{embed}). Moreover, we assume that $V=V_A \cap V_B$ is separable.

The structural properties we assume for the operators $A$ and $B$ throughout this paper are as follows:

\smallskip\noindent
{\bf Assumption~$(\mathbf{A})$}
\begin{em} The operator $A:V_A\to V_A'$ satisfies for all $u, v, w\in V_A$
\begin{itemize}\leftskip-0.5em
\item[\em (i)] $\theta \mapsto \langle A (u + \theta v) , w \rangle \in \mathscr{C}([0,1])$  \quad (hemicontinuity)\/,
\end{itemize}
and there
exist $p\in (2,\infty)$, $\mu_A, \beta_A > 0$, $c_A \ge 0$ such that for all $v, w\in V_A$
\begin{itemize}\leftskip-0.5em
\item[\em (ii)] $\langle Av - Aw, v - w \rangle \ge 0$ \quad (monotonicity)\/,
\item[\em (iii)] $\langle Av , v\rangle \ge \mu_A \, \|v\|_{V_A}^p  - c_A$ \quad ($p$-coercivity)\/,
\item[\em (iv)] $\|Av\|_{V_A'} \le \beta_A \left( 1 + \|v\|_{V_A}^{p-1} \right)$ \quad ($(p-1)$-growth)\/.
\end{itemize}
\end{em}

\smallskip\noindent
{\bf Assumption~$(\mathbf{B})$}
\begin{em}
The linear operator $B: V_B \to V_B'$ is symmetric and there exist
$\mu_B, \beta_B  > 0$ such that for all $v\in V_B$
\begin{itemize}\leftskip-0.5em
\item[\em (i)] $\langle Bv, v\rangle \ge \mu_B\, \|v\|_{V_B}^2$ \quad (strong positivity)\/,
\item[\em (ii)] $\|Bv\|_{V_B'} \le \beta_B \, \|v\|_{V_B}$ \quad (boundedness)\/.
\end{itemize}
\end{em}

By $\|\cdot\|_B = \langle B\cdot, \cdot\rangle^{1/2}$, we denote the norm induced by $B$,
which is equivalent to $\|\cdot\|_{V_B}$. Further, by $L^2(0,T;B)$, we denote the space $L^2(0,T;(V_B,\Vert \cdot\Vert_B))$.

We shall remark that the operators $A$ and $B$ can be extended, as usual, to
operators acting on functions defined on $[0,T]$ and taking values in $V_A$ and $V_B$,
respectively.
Since the operator $A:V_A\to V_A'$ is hemicontinuous and monotone, it is
also demicontinuous (see, e.g., Zeidler~\cite[Propos.~26.4 on
p.~555]{ZeidlerIIB}). Therefore, the operator $A$  maps, in view of the separability of $V_A'$ and the theorem of Pettis
(see, e.g., Diestel and Uhl~\cite[Thm.~2 on p.~42]{DiestelUhl}),
a Bochner measurable function $v: [0,T] \to V_A$ into a Bochner measurable
function $Av : [0,T] \to V_A'$ with $(Av)(t) = A v(t)$ ($t\in [0,T]$).
Because of the growth condition, the operator $A$ then maps $L^p(0,T;V_A)$ into $\left(L^p(0,T;V_A)\right)' = L^{p'}(0,T;V_A')$.
The linear, symmetric, bounded, strongly positive operator $B:V_B \to V_B'$ extends, via $(Bu)(t) = Bu(t)$ for a function $u: [0,T] \to V_B$,
to a linear, symmetric, bounded, strongly positive operator mapping
$L^2(0,T;V_B)$ into its dual. Indeed, $B$ can also be seen as a linear, bounded operator mapping $L^r(0,T;V_B)$ into $L^r(0,T;V_B')$ for any $r\in [1,\infty]$.

With respect to the operator $K$ defined by (\ref{Volterra}), we make the following observations.

\begin{lemma}\label{LemmaK}
Let $k$ be as in (\ref{Volterra}) and let $u_0 \in V_B$.
Then $K$ is an affine-linear, bounded mapping of $L^2(0,T;V_B)$ into itself with
\begin{equation*}
\|Kv-u_0\|_{L^2(0,T;V_B)} \le \|k\|_{L^1(0,T)} \|v\|_{L^2(0,T;V_B)}  , \quad v \in L^2(0,T;V_B) ,
\end{equation*}
where $\|k\|_{L^1(0,T)}= 1-e^{-\lambda T}$. Moreover, $K$
is an affine-linear, bounded mapping of $L^1(0,T;V_B)$ into $\mathscr{AC}([0,T];V_B)$ with
\begin{equation*}
\|Kv-u_0\|_{\mathscr{C}([0,T];V_B)} \le \lambda\, \|v\|_{L^1(0,T;V_B)}  , \quad v \in L^1(0,T;V_B) .
\end{equation*}
\end{lemma}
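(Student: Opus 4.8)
The plan is to verify the two asserted mapping properties of $K$ separately, in each case reducing everything to the scalar estimate on the convolution kernel $k$ combined with the Bochner-space versions of Young's and Minkowski's inequalities.

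First I would establish that $K$ maps $L^2(0,T;V_B)$ into itself with the stated bound. Since $(Kv)(t) - u_0 = \int_0^t k(t-s)v(s)\diff s$ is the convolution (on $[0,T]$, extending $v$ and $k$ by zero outside) of the scalar function $k \in L^1(0,T)$ with the $V_B$-valued function $v$, I would invoke Young's convolution inequality in the Bochner setting: $\|k \ast v\|_{L^2(0,T;V_B)} \le \|k\|_{L^1(0,T)} \|v\|_{L^2(0,T;V_B)}$. Concretely one writes $\|(Kv)(t)-u_0\|_{V_B} \le \int_0^t |k(t-s)|\, \|v(s)\|_{V_B}\diff s$ and applies the scalar Young inequality to the real-valued functions $t\mapsto |k(t-s)|$ and $s \mapsto \|v(s)\|_{V_B}$; alternatively Minkowski's integral inequality works directly. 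Then $\|k\|_{L^1(0,T)} = \int_0^T \lambda e^{-\lambda z}\diff z = 1 - e^{-\lambda T}$ is an elementary computation. Affine-linearity of $K$ is immediate from linearity of the integral operator $v \mapsto k\ast v$, and the bound shows this linear part is bounded, hence $K$ is a bounded affine-linear map of $L^2(0,T;V_B)$ into itself.

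Next I would treat the mapping $L^1(0,T;V_B) \to \mathscr{AC}([0,T];V_B)$. For $v \in L^1(0,T;V_B)$, the pointwise bound $\|(Kv)(t) - u_0\|_{V_B} \le \int_0^t |k(t-s)|\,\|v(s)\|_{V_B}\diff s \le \|k\|_{L^\infty(0,T)}\|v\|_{L^1(0,T;V_B)} = \lambda \|v\|_{L^1(0,T;V_B)}$ gives the claimed $\mathscr{C}([0,T];V_B)$-estimate, using $\|k\|_{L^\infty(0,T)} = \lambda$ (attained at $z=0$). To see that $Kv$ is actually absolutely continuous, I would differentiate: since $k(t-s) = \lambda e^{-\lambda(t-s)}$ is smooth in $t$, one finds formally $(Kv)'(t) = k(0)v(t) + \int_0^t k'(t-s)v(s)\diff s = \lambda v(t) - \lambda \int_0^t k(t-s) v(s)\diff s = \lambda v(t) - \lambda\big((Kv)(t) - u_0\big)$. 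The rigorous justification is the standard Leibniz rule for parameter-dependent Bochner integrals with a $C^1$ kernel; this identifies $(Kv)' \in L^1(0,T;V_B)$, so $Kv \in W^{1,1}(0,T;V_B)$, which by the embedding recalled in Section~2 coincides (up to a null set) with a function in $\mathscr{AC}([0,T];V_B)$. Affine-linearity and boundedness of the associated linear part follow as before.

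I do not expect a serious obstacle here; the statement is essentially a packaging of Young's inequality. The only point requiring a little care is the differentiation under the integral sign in the Bochner setting to conclude absolute continuity rather than mere continuity — one must check that $t \mapsto k'(t-s)v(s)$ is dominated by an integrable function uniformly in $t$, which holds since $|k'(z)| = \lambda^2 e^{-\lambda z} \le \lambda^2$ on $[0,T]$, and that the boundary term from the upper limit is handled correctly. An alternative route that avoids the Leibniz rule is to write $(Kv)(t) = u_0 + \lambda e^{-\lambda t}\int_0^t e^{\lambda s} v(s)\diff s$, observe that $t \mapsto \int_0^t e^{\lambda s}v(s)\diff s$ lies in $\mathscr{AC}([0,T];V_B)$ (being an indefinite Bochner integral of an $L^1$ function) and that $t \mapsto \lambda e^{-\lambda t}$ is $\mathscr{C}^1$, so the product is in $\mathscr{AC}([0,T];V_B)$ by the product rule for absolutely continuous functions.
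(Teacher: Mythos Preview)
Your argument is correct and is precisely the ``standard proof'' the paper alludes to: the paper omits the proof entirely, so there is nothing to compare beyond noting that Young's (or Minkowski's) inequality for the $L^2$-bound and the Leibniz/product-rule computation for absolute continuity are exactly what one expects here. Your derivative identity $(Kv)'(t)=\lambda v(t)-\lambda\big((Kv)(t)-u_0\big)$ is in fact the relation the paper records just after the lemma as the key structural property of the exponential kernel.
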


The standard proof is omitted here.


Note that in the assertions above, $V_B$ can be replaced by $H$.
An immediate consequence of the lemma above is that $BK : L^2(0,T;V_B) \to L^2(0,T;V_B')$ as well as $BK : L^1(0,T;V_B) \to L^\infty(0,T;V_B')$ is affine-linear and bounded if Assumption~$(\mathbf{B})$ is satisfied.

We will often make use of the following relation, which indeed is crucial within this work and reflects the exponential type of the memory kernel.
For a function $v\in L^1(0,T;X)$, $X$ being an arbitrary Banach space, we have that for almost all $t\in (0,T)$
\begin{equation}
 \label{GleichungK}
(Kv)'(t)=\lambda\, \Big( v(t) - \big(\!\left(Kv\right)(t) - u_0\big) \Big)  .
\end{equation}
If $X$ is a Hilbert space and if $v \in L^2(0,T;X)$, then testing this relation by $Kv - u_0$ immediately implies for all $t\in [0,T]$
\begin{equation}\label{Kpositiv}
\int_0^t ( (Kv)(s) - u_0 , v(s))_X \diff s =
\int_0^t \|(Kv)(s) - u_0\|_X^2 \diff s + \frac{1}{2\lambda}\, \|(Kv)(t) - u_0\|_X^2 ,
\end{equation}
which shows that the memory term is of positive type.

\section{Main result: existence via time discretisation}\label{Sec3}

\noindent
In this section, we show existence of generalised solutions to (\ref{abstract1}) by proving weak or weak* convergence of a sequence of approximate solutions constructed from a suitable time discretisation. We commence by studying the corresponding numerical scheme and its properties.

\subsection{Time discretisation}

\noindent
For $N\in \mathbb{N}$, let $\tau = T/N$ and $t_n = n\tau$
($n=0, 1, \dots , N$).
Let $v^0\approx v_0$, $u^0\approx u_0$ and $\{f^n\}_{n=1}^N \approx f$ be given
approximations of the problem data $v_0$, $u_0$ and $f$, respectively.
We look for approximations $v^n \approx v(t_n)$ ($n=1, 2, \dots , N$).

The numerical scheme we consider combines the implicit Euler method with a
convolution or product quadrature for the integral operator $K$ and reads
\begin{subequations}\label{scheme}
\begin{align}\label{scheme1}
\frac{1}{\tau} \left(v^n - v^{n-1}\right) + Av^n + \left(B K^\tau_{u^0} v\right)^n = f^n , \quad n= 1, 2, \dots , N ,
\end{align}
where
\begin{equation}\label{Ktau}
\left(K^\tau_{u^0} v\right)^n := u^0 + \tau \sum_{j=1}^n \gamma_{n-j+1} v^j .
\end{equation}
To be precise, $K^\tau_{u^0}$ acts on a grid function $\{v^n\}_{n=1}^N$ and, with a slight abuse of notation, the evaluation at $n$ is denoted by $\left(K^\tau_{u^0} v\right)^n$.
The coefficients $\gamma_i$ ($i=1, 2, \dots , N$) are given by
\begin{equation} \label{DefGamma}
\gamma_{i} =   \int_0^1 k\Big( (i - \hat{s})\tau\Big) \diff \hat{s},
\end{equation}
\end{subequations}
and hence
\begin{equation*}
\gamma_{n-j+1} =   \int_0^1 k\Big( (n-j+1 - \hat{s})\tau\Big) \diff \hat{s}
= \frac{1}{\tau}\int_{t_{j-1}}^{t_j} k(t_n - s) \diff s  .
\end{equation*}
The idea behind is the approximation
\begin{equation*}
\int_0^t k(t-s) v(s)\diff s \approx \sum_{j=1}^n \int_{t_{j-1}}^{t_j} k(t_n -s) \diff s \, v(t_j) \quad \text{for } t \in (t_{n-1},t_n] \ (n=1, 2, \dots , N) .
\end{equation*}

As we deal with kernels $k$ of exponential type given by (\ref{Volterra}), we can explicitly calculate
\begin{equation*}
 \gamma_i = \frac{e^{\lambda \tau}-1}{\tau}\,e^{-\lambda t_i} , \quad i = 1, 2, \dots , N ,
\end{equation*}
which immediately leads to the properties
\begin{equation}\label{Gamma1}
0 < \gamma_1=\frac{1-e^{-\lambda \tau}}{\tau} \to \lambda = k(0) \quad\text{as } \tau \to 0
\end{equation}
and
\begin{equation}\label{Gamma2}
 \gamma_{i+1}-\gamma_i= - \bigl(e^{\lambda \tau} - 1\bigr)\, \gamma_{i+1} , \quad i= 1, 2, \dots , N-1 .
\end{equation}

\subsection{Existence, uniqueness, and a priori estimates for the time discrete problem}

\noindent
In what follows, we show existence of solutions for the time discrete equation (\ref{Ktau}) and derive suitable a priori estimates.

\begin{theorem}[Time discrete problem] \label{aprioriestimate}
Let Assumptions~$(\mathbf{A})$ and $(\mathbf{B})$  be
fulfilled and let $u^0 \in V_B$, $v^0 \in H$,  $\{f^n\}_{n=1}^N =\{f_0^n\}_{n=1}^N + \{f_1^n\}_{n=1}^N \subset V_A'+H$ be
given. Then there is a unique solution $\{v^n\}_{n=1}^N\subset V = V_A \cap V_B$ to (\ref{scheme}). Moreover, there holds for $n=1, 2, \dots , N$
\begin{equation}\label{apriori}
  \begin{aligned}
    \|v^n\|_H^2 + \sum_{j=1}^n \|v^j-v^{j-1}\|_H^2 + \mu_A \tau\sum_{j=1}^n \|v^j\|_{V_A}^p
+ \frac{T}{e^{\lambda T} -1}\, \left\| \left(K^\tau_{u^0} v\right)^n\right\|_{V_B}^2 \\
    \le c\,\left(1 + \|u^0\|_{V_B}^2 +\|v^0\|_H^2
+ \tau \sum_{j=1}^N \|f_0^j\|_{V_A'}^{p'}+ \left( \tau \sum_{j=1}^N \|f_1^j\|_{H} \right)^2 \right).
  \end{aligned}
\end{equation}
\end{theorem}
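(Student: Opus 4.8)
\emph{Overview.} The plan is to prove the two assertions separately: existence and uniqueness of the iterates by induction on $n$, each step being solved by the standard theory of monotone operators on $V$; and the estimate \eqref{apriori} by testing \eqref{scheme1} with $v^j$, multiplying by $\tau$ and summing, where a \emph{discrete} version of the positivity relation \eqref{Kpositiv} plays the decisive role.

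\emph{Existence and uniqueness.} Suppose $v^0\in H$ and $v^1,\dots,v^{n-1}\in V$ are already constructed. Isolating the term with index $j=n$ in \eqref{Ktau}, the scheme \eqref{scheme1} for $v^n$ becomes $\mathcal{A}v^n=g^n$ in $V'$, with $\mathcal{A}w:=\tfrac1\tau w+Aw+\tau\gamma_1 Bw$ and $g^n:=f^n+\tfrac1\tau v^{n-1}-Bu^0-\tau\sum_{j=1}^{n-1}\gamma_{n-j+1}Bv^j$; by \eqref{embed} and $H\hookrightarrow V_B'$ one has $g^n\in V'=V_A'+V_B'$, and $\mathcal{A}$ maps $V$ into $V'$. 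Now $\mathcal{A}$ is hemicontinuous (by Assumption~$(\mathbf A)$(i), linearity of $B$, and continuity of the embeddings), strictly monotone (since $\langle\mathcal{A}v-\mathcal{A}w,v-w\rangle\ge\tfrac1\tau\|v-w\|_H^2$ by Assumption~$(\mathbf A)$(ii) and $(\mathbf B)$(i)), bounded (by Assumption~$(\mathbf A)$(iv) and $(\mathbf B)$(ii)), and coercive: Assumption~$(\mathbf A)$(iii) and $(\mathbf B)$(i) give $\langle\mathcal{A}w,w\rangle\ge\mu_A\|w\|_{V_A}^p+\tau\gamma_1\mu_B\|w\|_{V_B}^2-c_A$, and since $p>2$ this grows superlinearly in $\|w\|_V=\|w\|_{V_A}+\|w\|_{V_B}$; here it is essential that the coercivity of $A$ on $V_A$ and of $B$ on $V_B$ \emph{together} yield coercivity on $V=V_A\cap V_B$, no embedding between $V_A$ and $V_B$ being assumed. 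Since $V$ is reflexive and separable, the main theorem on monotone operators (e.g.\ Zeidler~\cite[Thm.~26.A]{ZeidlerIIB}) produces a unique $v^n\in V$; the induction starts from $v^0\in H$.

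\emph{A priori estimate: the memory term.} Testing \eqref{scheme1} with $v^j$, multiplying by $\tau$ and summing over $j=1,\dots,n$, the difference quotients telescope through $\langle v^j-v^{j-1},v^j\rangle=\tfrac12(\|v^j\|_H^2-\|v^{j-1}\|_H^2+\|v^j-v^{j-1}\|_H^2)$ into $\tfrac12\|v^n\|_H^2-\tfrac12\|v^0\|_H^2+\tfrac12\sum_{j=1}^n\|v^j-v^{j-1}\|_H^2$, while $A$ contributes at least $\mu_A\tau\sum_{j=1}^n\|v^j\|_{V_A}^p-c_AT$ by coercivity. For the memory contribution, write $w^j:=(K^\tau_{u^0}v)^j$ and $\tilde w^j:=w^j-u^0$ (so $\tilde w^0=0$). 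Isolating the summand $i=j$ in \eqref{Ktau} and using the weight recursion \eqref{Gamma2} together with $\tau\gamma_1 e^{\lambda\tau}=e^{\lambda\tau}-1$ (which follows from \eqref{Gamma1}) yields the discrete counterpart of \eqref{GleichungK},
\[
\tilde w^j-\tilde w^{j-1}=(e^{\lambda\tau}-1)\,(v^j-\tilde w^j),\qquad\text{equivalently}\qquad v^j=\tilde w^j+\frac{1}{e^{\lambda\tau}-1}\bigl(\tilde w^j-\tilde w^{j-1}\bigr).
\]
Substituting this into $\langle Bw^j,v^j\rangle=(\tilde w^j,v^j)_B+(u^0,v^j)_B$ (with $(\cdot,\cdot)_B:=\langle B\cdot,\cdot\rangle$, using symmetry of $B$) and applying $2(a,a-b)_B=\|a\|_B^2-\|b\|_B^2+\|a-b\|_B^2$, a summation by parts gives
\[
\tau\sum_{j=1}^n\langle Bw^j,v^j\rangle\ge\frac{\tau}{2(e^{\lambda\tau}-1)}\|\tilde w^n\|_B^2+\tau\sum_{j=1}^n\|\tilde w^j\|_B^2+\tau\sum_{j=1}^n(u^0,v^j)_B ,
\]
and the last sum, treated once more through $v^j=\tilde w^j+(e^{\lambda\tau}-1)^{-1}(\tilde w^j-\tilde w^{j-1})$, by telescoping and Young's inequality, is absorbed up to $\tfrac12\tau\sum_j\|\tilde w^j\|_B^2$ and a term $c\|u^0\|_{V_B}^2$ (using $\tau/(e^{\lambda\tau}-1)\le1/\lambda$ and $\|\cdot\|_B\simeq\|\cdot\|_{V_B}$). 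Since $x\mapsto x/(e^{\lambda x}-1)$ is decreasing and $\tau\le T$, the surviving term dominates $c\,\tfrac{T}{e^{\lambda T}-1}\|\tilde w^n\|_{V_B}^2$, and $\|w^n\|_{V_B}^2\le2\|\tilde w^n\|_{V_B}^2+2\|u^0\|_{V_B}^2$ turns this into $\tfrac{T}{e^{\lambda T}-1}\|(K^\tau_{u^0}v)^n\|_{V_B}^2$ as in \eqref{apriori}, up to data terms.

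\emph{A priori estimate: conclusion.} On the right-hand side I would bound $\langle f_0^j,v^j\rangle\le\tfrac{\mu_A}{2}\|v^j\|_{V_A}^p+c\|f_0^j\|_{V_A'}^{p'}$ (Young with exponents $p,p'$; the first term absorbed by coercivity) and $(f_1^j,v^j)_H\le\|f_1^j\|_H\max_{1\le i\le n}\|v^i\|_H$. Taking the maximum over the endpoint index in the resulting inequality and invoking Young's inequality once more bounds $\max_{1\le j\le n}\|v^j\|_H^2$ by the right-hand side of \eqref{apriori} --- the factor $(\tau\sum_{j=1}^N\|f_1^j\|_H)^2$ arising precisely from this quadratic-inequality step. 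Reinserting that bound controls all four terms on the left of \eqref{apriori}, and collecting everything into one generic constant $c$ finishes the proof. The genuinely delicate point is the discrete positivity estimate for the product quadrature, i.e.\ the discrete analogue of \eqref{GleichungK}--\eqref{Kpositiv}; the remainder is the standard energy argument for the implicit Euler scheme, with care required only in tracking constants.
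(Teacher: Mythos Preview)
Your proposal is correct and follows essentially the same route as the paper: Browder--Minty for the step equation $\bigl(\tfrac1\tau I+A+\tau\gamma_1 B\bigr)v^n=g^n$, testing with $v^j$, the telescoping identity $(a-b)a=\tfrac12(a^2-b^2+(a-b)^2)$, the discrete analogue of \eqref{GleichungK} to control the memory term, and the $\max_j\|v^j\|_H$ trick for the $f_1$-part. The only cosmetic difference is that the paper keeps $w^j=(K^\tau v)^j$ and splits $\langle Bw^j,v^j\rangle=\langle Bw^j,w^j-u^0\rangle+\tfrac{1}{e^{\lambda\tau}-1}\langle Bw^j,w^j-w^{j-1}\rangle$ directly, whereas you first pass to $\tilde w^j=w^j-u^0$ and handle the cross term $(u^0,v^j)_B$ separately; both lead to the same lower bound up to harmless constants.
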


\begin{proof}
For better readability, we write $K^\tau$ instead of $K^\tau_{u^0}$ during this proof. We commence with proving existence and uniqueness of a solution step by step.
In the $n$th step, (\ref{scheme1}) is equivalent to determine $v^n$ from
$\{v^j\}_{j=1}^{n-1}\subset V$ and the data of the problem by solving
\begin{equation*}
\left(\frac{1}{\tau} I + A + \tau \gamma_1 B\right) v^n = f^n + \frac{1}{\tau}\, v^{n-1} - Bu^0 - \tau \sum_{j=1}^{n-1} \gamma_{n-j+1}Bv^{j} .
\end{equation*}
Because of the continuous embeddings (\ref{embed}), we know that the right-hand side of the foregoing relation is in $V'$. The operator
$M := \frac{1}{\tau} I + A + \tau \gamma_1 B$ is easily shown to be hemicontinuous, coercive, and strictly monotone as a mapping of $V$ into $V'$.
Here, we make use of the fact that $\gamma_1 > 0$. In particular, we observe that for all $w\in V$
\begin{equation*}
  \langle Mw , w \rangle \ge \frac{1}{\tau}\, \|w\|_H^2 + \mu_A \, \|w\|_{V_A}^p - c_A + \tau\gamma_1\mu_B \, \|w\|_{V_B}^2
\ge c \, \|w\|_{V}^2 - c .
\end{equation*}
The famous theorem of Browder and Minty (see, e.g.,
Zeidler~\cite[Thm.~26.A on p.~557]{ZeidlerIIB}) now provides existence of a solution $v^n \in V$; uniqueness immediately follows
from the strict monotonicity.

For proving the a priori estimate, we test (\ref{scheme1}) by $v^n\in V$ ($n=1, 2, \dots , N$). With
\begin{equation}\label{aba}
(a-b) a  = \frac{1}{2} \left( a^2 - b^2 + (a-b)^2 \right) , \quad a, b \in \mathbb{R} ,
\end{equation}
the coercivity of $A$, and Young's inequality, we find for $n=1, 2,
\dots , N$
\begin{equation*}
  \begin{split}
    &\frac{1}{2\tau}\left(\|v^n\|_H^2 - \|v^{n-1}\|_H^2 + \|v^n - v^{n-1}\|_H^2\right) +
\mu_A\, \|v^n\|_{V_A}^p  - c_A + \langle (B K^\tau v)^n, v^n \rangle
\\
&\le \langle f^n , v^n \rangle \le \|f_0^n\|_{V_A'} \|v^n\|_{V_A} +\|f_1^n\|_{H} \|v^n\|_{H}
\le \frac{\mu_A}{2}\, \|v^n\|_{V_A}^p +
c\, \|f_0^n\|_{V_A'}^{p'} +  \|f_1^n\|_{H}\, \|v^n\|_{H} .
  \end{split}
\end{equation*}
By summing up, we obtain for $n=1, 2, \dots , N$
\begin{equation*}
  \begin{split}
    &\|v^n\|_H^2 + \sum_{j=1}^n \|v^j - v^{j-1}\|_H^2 + \mu_A\tau \sum_{j=1}^n \|v^n\|_{V_A}^p
+ 2\tau \sum_{j=1}^n  \langle \left(B K^\tau v\right)^j, v^j \rangle\\
    &\leq c \left(1+\|v^0\|_{H}^2 + \tau \sum_{j=1}^n \|f_0^j\|_{V_A'}^{p'} \right) + 2\tau \sum_{j=1}^n \|f^j_1\|_H \, \|v^j\|_H ,
  \end{split}
\end{equation*}
where
\begin{equation*}
2\tau \sum_{j=1}^n \|f^j_1\|_H \, \|v^j\|_H \le
2\tau \sum_{j=1}^n \|f^j_1\|_H \, \max_{j=1, \dots , n} \|v^j\|_H
\le
2\left( \tau \sum_{j=1}^n \|f^j_1\|_H   \right)^2 + \frac{1}{2}\,
\max_{j=1, \dots , n} \|v^j\|_H^2 .
\end{equation*}

We observe that (\ref{Ktau}) together with (\ref{Gamma1}), (\ref{Gamma2}) implies for $n=2, 3, \dots , N$
\begin{equation}\label{KFunkDisk}
\frac{1}{\tau}\left( (K^\tau v)^n - (K^\tau v)^{n-1}  \right)
= \gamma_1 v^n + \sum_{j=1}^{n-1} \left(\gamma_{n-j+1} - \gamma_{n-j}\right) v^j
= \frac{e^{\lambda \tau}-1}{\tau} \Big(
v^n - \bigr((K^\tau v)^n -  u^0\bigr) \Big)\;\! ,
\end{equation}
which is the discrete analogue of the crucial relation (\ref{GleichungK}).
Note that $(e^{\lambda \tau}-1)/\tau \to \lambda$ as $\tau \to 0$.
With $(K^\tau v)^0 := u^0$, the relation (\ref{KFunkDisk}) remains true for $n=1$.

Resolving (\ref{KFunkDisk}) for $v^n$ gives, together with (\ref{aba}) (recall that $B:V_B \to V_B'$ induces an inner product on $V_B$),
\begin{align*}
2\tau\sum_{j=1}^n  \langle \left(B K^\tau v\right)^j, v^j  \rangle
&= \frac{2\tau}{e^{\lambda \tau} -1}\sum_{j=1}^n
\left\langle (B K^\tau v)^j ,( K^\tau v)^j -( K^\tau v)^{j-1} \right\rangle
+ 2\tau\sum_{j=1}^n \langle \left(B K^\tau v\right)^j, \left(K^\tau v\right)^j -u^0  \rangle
\\
&\ge
\frac{\tau}{e^{\lambda \tau} -1}\Big(\|\left(K^\tau v\right)^n\|_B^2 -
\|\left(K^\tau v\right)^0\|_B^2\Big) + \tau \sum_{j=1}^n \|\left(K^\tau v\right)^j\|_B^2
- T\, \|u^0\|_B^2 ,
\end{align*}
where
\begin{equation*}
\frac{\tau}{e^{\lambda \tau} -1} \ge  \frac{T}{e^{\lambda T} -1} .
\end{equation*}

Altogether, we find for $n=1, 2, \dots , N$
\begin{equation} \label{aprioriproof}
  \begin{split}
    &\|v^n\|_H^2 + \sum_{j=1}^n \|v^j - v^{j-1}\|_H^2 + \mu_A\tau \sum_{j=1}^n \|v^n\|_{V_A}^p
+  \frac{T}{e^{\lambda T} -1} \, \|\left(K^\tau v\right)^n\|_B^2
\\
    &\le c \left(1 + \|u^0\|_{V_B}^2 +\|v^0\|_{H}^2 + \tau \sum_{j=1}^N \|f_0^j\|_{V_A'}^{p'} + \left( \tau \sum_{j=1}^N \|f^j_1\|_H   \right)^2\right) + \frac{1}{2}\,
\max_{j=1, \dots , n} \|v^j\|_H^2  .
  \end{split}
\end{equation}
Taking the maximum over all $n\in\{1,...,N\}$ first on the right-hand side and then on the left-hand side leads to
\begin{equation*}
  \max_{j=1,...,N} \|v^j\|^2_H \leq C+ \frac{1}{2}\,
\max_{j=1, \dots , N} \|v^j\|_H^2 ,
\end{equation*}
where $C>0$ depends on $u^0$, $v^0$, and $\{f^n\}$. This yields $$\max_{j=1,...,N} \|v^j\|^2_H \leq 2C,$$ which, inserted in \eqref{aprioriproof}, proves the assertion.
\qed
\end{proof}

\subsection{Convergence of approximate solutions and existence of a generalised solution}

\noindent
Let $\{N_\ell\}$ be a sequence of positive integers such that $N_\ell \to \infty$ as $\ell \to \infty$. We consider the corresponding sequence of time discrete problems (\ref{scheme}) with step sizes $\tau_\ell = T/N_\ell$, starting values $v^0_\ell \in H$
with $v^0_\ell \to v_0$ in $H$ as well as $u^0_\ell \in V_B$
with $u^0_\ell \to u_0$ in $V_B$, and right-hand sides $\{f^n\}_{n=1}^{N_\ell} \subset V_A'+H$ given by
\begin{equation}\label{discrete2}
f^n=f_0^n+f_1^n,\quad f_{0,1}^n := \frac{1}{\tau_\ell} \int_{t_{n-1}}^{t_n} f_{0,1}(t) \diff t ,
\end{equation}
which is well-defined for $f=f_0+f_1\in  L^{p'}(0,T;V_A')+L^1(0,T;H)$.
As a slight abuse of notation, in general, we do not call the dependence of $u^n, v^n, f^n$ and of the time instances $t_n$ on $\ell$.

Let $\{v^n\}_{n=1}^{N_\ell} \subset V$ denote the solution to
(\ref{scheme}) with step size $\tau_\ell$. We  then consider the piecewise constant functions $v_\ell$ with $v_\ell(t) = v^n$ for $t \in
(t_{n-1},t_n]$ ($n=1, 2, \dots , N_\ell$) and $v_\ell(0)=v^1$. 
Moreover, let $\hat{v}_\ell$ be the piecewise affine-linear interpolation
of the points $(t_n,v^n)$ ($n=0, 1, \dots , N_\ell$) and $f_\ell$ be the piecewise constant function with $f_\ell(t)=f^n$ for $t \in (t_{n-1},t_n]$ ($n=1, 2, \dots , N_\ell$) and $f_\ell(0)=f^1$.

Regarding the integral operator $K$, we define for any integrable function $w$ the piecewise constant function $K_\ell w$ by means of
\begin{equation*}
 (K_\ell w)(t)= u^0_\ell + \int_0^{t_n} k(t_n-s) w(s)\diff s \quad
 \text{if } t\in (t_{n-1}, t_n] \; (n=1,\dots, N_\ell) ,
\end{equation*}
with $(K_\ell w)(0) := u^0_\ell$. As an immediate consequence, we obtain with \eqref{Ktau} and \eqref{DefGamma}
\begin{equation}\label{K_ell}
(K_\ell v_\ell) (t)= \left(K^{\tau_\ell}_{u^0_\ell} v\right)^n \quad\text{if }t\in (t_{n-1}, t_n]\; (n=1,\dots, N_\ell) .
\end{equation}

We are now able to state the main result.

\begin{theorem}[Existence via convergence]\label{MainThm}
Let Assumptions {\bf$(\mathbf{A})$} and {\bf $(\mathbf{B})$} be fulfilled, let $u_0\in V_B$, $v_0\in H$, and $f\in L^{p'}(0,T;V_A') + L^1(0,T;H)$.
Then there exists a solution $v \in L^p(0,T;V_A) \cap \mathscr{C}_w([0,T];H)$ to problem (\ref{abstract1}) with $Kv\in \mathscr{C}_w([0,T];V_B)$ such that (\ref{abstract1a}) holds in the sense of  $L^{p'}(0,T;V_A') + L^1(0,T;H)$.

Passing to a subsequence if necessary,
both the piecewise constant prolongation $v_{\ell}$ and the piecewise affine-linear prolongation $\hat v_{\ell}$ of the approximate solutions to
(\ref{scheme})
converge weakly* in $L^\infty(0,T;H)$ to $v$ as $\ell\to \infty$. Furthermore, again passing to a subsequence if necessary and as $\ell\to\infty$, $v_\ell$ converges weakly  to $v$ in $L^p(0,T;V_A)$ and the approximation $K_\ell v_{\ell}$ of the memory term converges weakly* to $Kv$ in $L^\infty(0,T;V_B)$. The approximation $\hat v_{\ell}'$ of the time derivative converges to $v'\in L^{p'}(0,T;V_A') + L^1(0,T;H)+L^\infty(0,T;V_B')$ in the sense that $\langle \hat v_{\ell}' ,w\rangle \to \langle v',w\rangle$ for all $w\in L^p(0,T;V_A)\cap L^\infty(0,T;H)\cap L^1(0,T;V_B)$.\footnote{Here, we slightly abuse the notation of the duality pairing, since $L^{p'}(0,T;V_A') + L^1(0,T;H)+L^\infty(0,T;V_B')$ is not the dual space of $L^p(0,T;V_A)\cap L^\infty(0,T;H)\cap L^1(0,T;V_B)$. However, we can consider the sum of the duality pairings between $L^{p'}(0,T;V_A') + L^1(0,T;H)$ and $L^1(0,T;V_B)$ and their respective dual spaces.}
\end{theorem}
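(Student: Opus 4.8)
The strategy is the standard Rothe/time-discretisation programme: derive uniform bounds from Theorem~\ref{aprioriestimate}, extract weakly (weakly*) convergent subsequences, identify the limit of the linear terms directly, and use a monotonicity (Minty--Browder) argument together with the integration-by-parts formula of Lemma~\ref{PartInt} to identify the limit of the nonlinear term $Av$. First I would rewrite the a priori estimate \eqref{apriori} in terms of the prolongations: since $v^0_\ell\to v_0$ in $H$, $u^0_\ell\to u_0$ in $V_B$, and the $f^n$ are the local averages \eqref{discrete2} of $f\in L^{p'}(0,T;V_A')+L^1(0,T;H)$, Jensen's inequality gives $\tau\sum\|f_0^j\|_{V_A'}^{p'}\le\|f_0\|_{L^{p'}(0,T;V_A')}^{p'}$ and $\tau\sum\|f_1^j\|_H\le\|f_1\|_{L^1(0,T;H)}$, so the right-hand side of \eqref{apriori} is bounded uniformly in $\ell$. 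Hence $\{v_\ell\}$ is bounded in $L^\infty(0,T;H)\cap L^p(0,T;V_A)$, $\{K_\ell v_\ell\}$ is bounded in $L^\infty(0,T;V_B)$ (using \eqref{K_ell} and the equivalence of $\|\cdot\|_B$ and $\|\cdot\|_{V_B}$), and $\sum_j\|v^j-v^{j-1}\|_H^2$ is bounded. The bound on the jumps gives $\|v_\ell-\hat v_\ell\|_{L^\infty(0,T;H)}^2\le\max_j\|v^j-v^{j-1}\|_H^2\to0$ and also controls $\|\hat v_\ell\|_{L^\infty(0,T;H)}$, so $\hat v_\ell$ obeys the same bounds and has the same weak* limit as $v_\ell$. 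By reflexivity of $L^p(0,T;V_A)$ and sequential Banach--Alaoglu, after passing to a subsequence we obtain $v_\ell\wsconv v$ in $L^\infty(0,T;H)$, $v_\ell\wconv v$ in $L^p(0,T;V_A)$, $\hat v_\ell\wsconv v$ in $L^\infty(0,T;H)$, $K_\ell v_\ell\wsconv$ some $\chi$ in $L^\infty(0,T;V_B)$, and $Av_\ell\wconv\xi$ in $L^{p'}(0,T;V_A')$ by the growth condition (iv).

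Next I would pass to the limit in the discrete equation. Written in prolongation form, \eqref{scheme1} reads $\hat v_\ell' + Av_\ell + BK_\ell v_\ell = f_\ell$ as an identity in $L^{p'}(0,T;V_A')+L^\infty(0,T;V_B')$ on each subinterval; testing with $w\in L^p(0,T;V_A)\cap L^\infty(0,T;H)\cap L^1(0,T;V_B)$ and integrating, each term converges: $\langle\hat v_\ell',w\rangle=-\langle\hat v_\ell,w'\rangle+[\ldots]$ handled via the weak* convergence of $\hat v_\ell$ (or directly, defining $v'$ as $f-\xi-B\chi$); $\langle Av_\ell,w\rangle\to\langle\xi,w\rangle$; $\langle BK_\ell v_\ell,w\rangle\to\langle B\chi,w\rangle$ since $B$ is linear bounded on $L^1(0,T;V_B)$ into $L^\infty(0,T;V_B')$ and $K_\ell v_\ell\wsconv\chi$; and $f_\ell\to f$ strongly in $L^{p'}(0,T;V_A')+L^1(0,T;H)$. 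This yields $v'+\xi+B\chi=f$ with $v'\in L^{p'}(0,T;V_A')+L^1(0,T;H)+L^\infty(0,T;V_B')$ in the stated duality sense. One then checks $\chi=Kv$: writing $K_\ell v_\ell-u_0^0=\int_0^{t_n}k(t_n-s)v_\ell(s)\diff s$ one compares with $Kv-u_0=\int_0^\cdot k(\cdot-s)v(s)\diff s$; the difference splits into (a) the error from replacing the upper limit $t$ by $t_n$, which is $O(\tau_\ell)$ uniformly since $k$ and $v_\ell$ are $L^\infty$-in-time-$V_B$-bounded on the relevant scale, and (b) $\int_0^t k(t-s)(v_\ell-v)(s)\diff s$, which converges weakly in $V_B$ (indeed in $H$) for each $t$ by the weak convergence $v_\ell\wconv v$ tested against the fixed kernel; together with the uniform $L^\infty(0,T;V_B)$ bound this identifies $\chi=Kv$. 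The identity \eqref{GleichungK} then gives $Kv\in W^{1,1}(0,T;V_B)\hookrightarrow\mathscr{C}([0,T];V_B)$, and combined with the $L^\infty$ bound one upgrades to $Kv\in\mathscr{C}_w([0,T];V_B)$; similarly $v\in\mathscr{C}_w([0,T];H)$ follows from $v\in L^\infty(0,T;H)$ with $v'$ in a space of distributions, via a standard lemma (Strauss/Lions). The initial condition $v(0)=v_0$ is obtained from $\hat v_\ell(0)=v^0_\ell\to v_0$ together with weak continuity.

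The main obstacle — and the crux of the whole argument — is identifying $\xi=Av$. For this I would use Minty's trick, which requires producing the limsup inequality $\limsup_\ell\langle Av_\ell,v_\ell\rangle\le\langle f-v'-B\chi,v\rangle$, equivalently $\limsup_\ell\int_0^T\langle Av_\ell,v_\ell\rangle\le\int_0^T\langle\xi,v\rangle$. Testing the discrete equation by $v^n$ and summing yields $\tau\sum\langle Av^j,v^j\rangle=\tau\sum\langle f^j,v^j\rangle-\tfrac12(\|v^{N_\ell}\|_H^2-\|v^0_\ell\|_H^2)-\tfrac12\sum\|v^j-v^{j-1}\|_H^2-2\tau\sum\langle(BK^\tau v)^j,v^j\rangle$. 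The first term converges to $\int_0^T\langle f,v\rangle$; the jump term is nonnegative so can be dropped after taking $\limsup$; for $-\tfrac12\|v^{N_\ell}\|_H^2$ one uses weak lower semicontinuity of $\|\cdot\|_H$ together with $v_\ell(T)=v^{N_\ell}\wconv v(T)$ in $H$ (another weak-continuity point); and the memory term is handled by the discrete positivity computation already performed inside the proof of Theorem~\ref{aprioriestimate}, namely $2\tau\sum\langle(BK^\tau v)^j,v^j\rangle\ge\frac{\tau}{e^{\lambda\tau}-1}(\|(K^\tau v)^{N_\ell}\|_B^2-\|u^0_\ell\|_B^2)+\tau\sum\|(K^\tau v)^j\|_B^2-T\|u^0_\ell\|_B^2$, whose liminf is bounded below, via weak lower semicontinuity of $\|\cdot\|_B$ and the identification $K_\ell v_\ell\wsconv Kv$, by the corresponding continuous quantity $\int_0^T\|Kv-u_0\|_B^2+\frac1{2\lambda}\|Kv(T)-u_0\|_B^2-T\|u_0\|_B^2=2\int_0^T\langle B(Kv-u_0),v\rangle$ — exactly the continuous integration-by-parts formula \eqref{Kpositiv} (the role of Lemma~\ref{PartInt}). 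Assembling these estimates gives $\limsup_\ell\int_0^T\langle Av_\ell,v_\ell\rangle\le\int_0^T\langle f-v'-B(Kv-u_0)-Bu_0,v\rangle=\int_0^T\langle\xi,v\rangle$, and Minty's lemma applied to the monotone hemicontinuous $A$ then forces $\xi=Av$ a.e., completing the proof. The delicate points requiring care are the weak continuity in $H$ of the endpoint evaluations (so that $v_\ell(T)\wconv v(T)$), the matching of the discrete and continuous memory energy identities in the limit, and making the several distinct duality pairings in the statement precise, as flagged in the footnote.
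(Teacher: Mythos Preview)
Your overall strategy matches the paper's, but there is a genuine gap in the Minty step. You propose to bound the discrete memory term from below using ``the discrete positivity computation already performed inside the proof of Theorem~\ref{aprioriestimate}''. That inequality was obtained there by applying Young's inequality to the cross term $\langle B(K^\tau v)^j,(K^\tau v)^j-u^0\rangle$ and is therefore \emph{lossy}: if you carry it through, the resulting upper bound on $\limsup_\ell\langle Av_\ell,v_\ell\rangle$ overshoots $\langle a,v\rangle$ by the nonnegative quantity $\tfrac12\int_0^T\|Kv-u_0\|_B^2\diff s$, and Minty's trick fails. The paper instead performs a sharper discrete computation (using \eqref{KFunkDisk} directly, without Young) that keeps the term $\int_0^T\langle BK_\ell v_\ell,u^0_\ell\rangle\diff t$ intact, so that after taking $\liminf$ --- which also requires establishing $(K_\ell v_\ell)(T)\rightharpoonup(Kv)(T)$ in $V_B$, a nontrivial endpoint identification carried out separately in the paper --- one lands \emph{exactly} on the right-hand side of Lemma~\ref{PartInt}.

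The companion error is your appeal to \eqref{Kpositiv} on the continuous side to rewrite the limiting memory energy as $2\int_0^T\langle B(Kv-u_0),v\rangle$. Formula~\eqref{Kpositiv} requires $v\in L^2(0,T;V_B)$, which is not available here; in fact neither $\langle v',v\rangle$ nor $\langle BKv,v\rangle$ is individually well-defined. This is precisely why Lemma~\ref{PartInt} is the crux: it evaluates the pairing $\langle v'+BKv,v\rangle$ \emph{as a whole} (well-defined since $v'+BKv=f-a\in L^{p'}(0,T;V_A')+L^1(0,T;H)$), and its right-hand side is what the sharp discrete lower bound must match term by term. Your parenthetical ``(the role of Lemma~\ref{PartInt})'' is correct in spirit, but the argument you wrote relies on \eqref{Kpositiv} and on a lossy discrete estimate, neither of which suffices. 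Two smaller inaccuracies: $\max_j\|v^j-v^{j-1}\|_H^2\to0$ does not follow from boundedness of the sum --- use $\|v_\ell-\hat v_\ell\|_{L^2(0,T;H)}^2=\tfrac{\tau}{3}\sum_j\|v^j-v^{j-1}\|_H^2\to0$ instead; and $Kv\in W^{1,1}(0,T;V_B)$ is unjustified since $(Kv)'=\lambda(v-(Kv-u_0))$ and $v\notin L^1(0,T;V_B)$ in general --- one only obtains $Kv\in\mathscr{AC}([0,T];V_A+V_B)$, whence $Kv\in\mathscr{C}_w([0,T];V_B)$.
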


Note that if $\left\{\tau_\ell \|v^0_\ell\|_{V_A}^p\right\}$ is bounded then also
$\hat v_\ell$ converges weakly in $L^p(0,T;V_A)$ to $v$ as $\ell\to\infty$ (passing to a subsequence if necessary).

The proof of Theorem~\ref{MainThm} will be prepared by the following integration-by-parts formula.

\begin{lemma}
\label{PartInt}
Let $u_0\in V_B$ and $w\in L^p(0,T; V_A)\cap L^\infty(0,T;H)$ such that $Kw\in L^\infty(0,T;V_B)$ and $w'+BKw\in L^{p'}(0,T; V_A')+L^1(0,T;H)$. Then for all $t \in [0,T]$, there holds
\begin{equation}\label{PartInt2}
\begin{split}
&\int^t_0 \langle w'(s) + (BKw)(s), w(s)\rangle \diff s = \\&\frac{1}{2}\|w(t)\|^2_H - \frac{1}{2}\|w(0)\|^2_H + \frac{1}{2\lambda}\|(Kw)(t)\|^2_{B} -\frac{1}{2\lambda}\|u_0\|^2_{B}  - \int_0^t \langle (BKw)(s), u_0\rangle \diff s + \int_0^t \|(Kw)(s)\|^2_{B}\diff s.
\end{split}
\end{equation}
\end{lemma}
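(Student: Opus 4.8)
The plan is to split the integrand into the two structurally different pieces $\langle w', w\rangle$ and $\langle BKw, w\rangle$, treat each by a regularisation/density argument, and then recombine. For the first piece, the standard integration-by-parts formula for the Gelfand triple does not apply directly because $w'+BKw$, not $w'$ alone, is assumed to lie in $L^{p'}(0,T;V_A')+L^1(0,T;H)$. However, since $Kw\in L^\infty(0,T;V_B)$, Assumption~$(\mathbf{B})$(ii) gives $BKw\in L^\infty(0,T;V_B')\subset L^{p'}(0,T;V_A'+V_B')$, so I would first argue that $w'$ itself lies in $L^{p'}(0,T;V_A')+L^1(0,T;H)+L^\infty(0,T;V_B')$. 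Then, testing with $w\in L^p(0,T;V_A)\cap L^\infty(0,T;H)\cap\big(\text{the space on which }Kw\in L^\infty(0,T;V_B)\text{ lets us pair}\big)$ is legitimate in the sense of the footnote's splitting of the duality pairing, and the classical result (e.g.\ Zeidler or Roub\'i\v{c}ek, as cited) yields $\int_0^t\langle w'(s),w(s)\rangle\,\diff s=\tfrac12\|w(t)\|_H^2-\tfrac12\|w(0)\|_H^2$ together with $w\in\mathscr{C}_w([0,T];H)$; one must check that the $V_B'$-component of $w'$ paired with $w$ is absorbed consistently, using that $w\in L^\infty(0,T;H)$ and a mollification in time.

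For the memory piece, the key identity is (\ref{GleichungK}): almost everywhere, $(Kw)'(t)=\lambda\big(w(t)-((Kw)(t)-u_0)\big)$, hence $w(t)=\tfrac1\lambda (Kw)'(t)+(Kw)(t)-u_0$. Substituting this into $\langle (BKw)(s),w(s)\rangle$ gives
\begin{equation*}
\langle (BKw)(s),w(s)\rangle=\frac{1}{\lambda}\langle (BKw)(s),(Kw)'(s)\rangle+\langle (BKw)(s),(Kw)(s)\rangle-\langle (BKw)(s),u_0\rangle .
\end{equation*}
Since $B$ is linear, symmetric and induces the inner product $\langle B\cdot,\cdot\rangle=\|\cdot\|_B^2$, the first term integrates (by the chain rule for $\tfrac12\tfrac{d}{ds}\|Kw(s)\|_B^2$, valid because $Kw\in L^\infty(0,T;V_B)$ with $(Kw)'\in L^2$ by (\ref{GleichungK}) and Lemma~\ref{LemmaK}) to $\tfrac{1}{2\lambda}\|(Kw)(t)\|_B^2-\tfrac{1}{2\lambda}\|u_0\|_B^2$, the second term integrates to $\int_0^t\|(Kw)(s)\|_B^2\,\diff s$, and the third term is exactly $-\int_0^t\langle (BKw)(s),u_0\rangle\,\diff s$. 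Adding the two pieces produces (\ref{PartInt2}). This is essentially the continuous counterpart of the discrete computation carried out in the proof of Theorem~\ref{aprioriestimate} around (\ref{KFunkDisk}).

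The main obstacle I anticipate is the rigorous justification of the integration by parts for the $w'$ term when $w'$ is only known to lie in a \emph{sum} of three spaces that is not the dual of any single reflexive space, so the usual Lions--Magenes / Gelfand-triple lemma is not immediately quotable. I would handle this by a double regularisation: mollify $w$ in time (Steklov averages $w_h$), note $w_h\to w$ in $L^p(0,T;V_A)\cap L^1(0,T;V_B)$ and weakly-$*$ in $L^\infty(0,T;H)$ with $w_h'\to w'$ in the appropriate sum, verify the formula for the smooth approximants where all pairings are classical, and pass to the limit, using (\ref{Kpositiv})-type monotonicity/lower-semicontinuity only if needed for the quadratic terms; continuity of $t\mapsto\|w(t)\|_H$ in the weak sense and of $t\mapsto\|(Kw)(t)\|_B$ (the latter from Lemma~\ref{LemmaK}, which gives $Kw\in\mathscr{AC}([0,T];V_B)$ when $w\in L^1(0,T;V_B)$, hence a genuine, not merely a.e., value at $t$) secures that the endpoint terms are well defined. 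A secondary point to be careful about is that the decomposition $w'+BKw=g_0+g_1$ with $g_0\in L^{p'}(0,T;V_A')$, $g_1\in L^1(0,T;H)$ is not unique, but the left-hand side of (\ref{PartInt2}) is independent of the choice since $w$ is a fixed admissible test function, so no ambiguity arises.
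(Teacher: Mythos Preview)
Your proposal has a genuine gap at its very first step: the splitting $\langle w'+BKw,w\rangle=\langle w',w\rangle+\langle BKw,w\rangle$ is not legitimate, because neither summand is individually defined. The hypothesis $Kw\in L^\infty(0,T;V_B)$ does \emph{not} imply $w\in L^1(0,T;V_B)$ (nor anything about $w$ in $V_B$), so $BKw\in L^\infty(0,T;V_B')$ cannot be paired with $w$. The same obstruction hits your treatment of the memory piece: after substituting $w=\tfrac{1}{\lambda}(Kw)'+Kw-u_0$, the term $\langle BKw,(Kw)'\rangle$ still fails, since $(Kw)'=\lambda w-\lambda(Kw-u_0)$ inherits from $w$ the lack of $V_B$-regularity. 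Your claim that $(Kw)'\in L^2$ in $V_B$ ``by Lemma~\ref{LemmaK}'' is unjustified---that lemma requires $w\in L^1(0,T;V_B)$, which you do not have. Steklov mollification in time does not rescue this plan as stated: although $S_hw$ does acquire $V_B$-regularity through the identity $S_hw=\tfrac{1}{\lambda}(S_hKw)'+S_h(Kw-u_0)$, you cannot pass to the limit in the \emph{split} terms, because the limiting pairings $\langle w',w\rangle$ and $\langle BKw,w\rangle$ simply do not exist separately.

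The paper handles this by substituting in \emph{both} slots simultaneously: one writes $w'+BKw=\tfrac{1}{\lambda}(Kw)''+(Kw)'+BKw$ and $w=\tfrac{1}{\lambda}(Kw)'+Kw-u_0$, and then splits the resulting pairing as $\langle\,\cdot\,,\tfrac{1}{\lambda}(Kw)'\rangle+\langle\,\cdot\,,Kw-u_0\rangle$. Both pieces are now well defined, because $w'+BKw\in L^{p'}(0,T;V_A')+L^1(0,T;H)$ while $(Kw)'\in L^p(0,T;V_A)\cap L^\infty(0,T;H)$, and---crucially---$Kw-u_0\in L^\infty(0,T;V)$ lies in the full intersection $V_A\cap V_B$, so it can be paired with each of $(Kw)''$, $(Kw)'$, and $BKw$ separately. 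The second-order piece $\langle\tfrac{1}{\lambda}(Kw)''+BKw,\tfrac{1}{\lambda}(Kw)'\rangle$ is then handled by Strauss's integration-by-parts theorem for abstract wave-type equations, and the remaining terms are elementary. Recombining everything via the algebraic identity for $\tfrac12\|w(t)\|_H^2$ in terms of $(Kw)'(t)$ and $Kw(t)-u_0$ yields~\eqref{PartInt2}. This detour through $Kw$ and its second derivative is the missing idea in your argument.
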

\begin{proof}
The difficulty in proving (\ref{PartInt2}) is that neither $w'\in L^{p'}(0,T;V_A')+L^1(0,T;H)$ nor $BKw\in L^{p'}(0,T; V_A')+L^1(0,T;H)$ can be assumed but only the sum $w'+BKw$ is known to be in $L^{p'}(0,T;V_A')+L^1(0,T;H)$. Hence it is not possible to split the sum on the left-hand side and to carry out integration by parts separately for both terms.

As $w'+BKw\in L^{p'}(0,T;V_A')+L^1(0,T;H)$ and $Kw\in L^\infty(0,T;V_B')$, it is easy to show that $w' \in L^{p'}(0,T;V_A')+L^1(0,T;V_B') \subset L^1(0,T;V_A'+V_B')$ and thus $w\in \mathscr{AC}([0,T];V_A'+V_B')$. In view of Lions and Magenes~\cite[Ch.~3, Lemma~8.1]{LionsMagenes}, we find $w\in \mathscr{C}_w([0,T];H)$. Because of (\ref{GleichungK}), we also see that $(Kw)' \in L^1(0,T; V_A+V_B)$ and thus $Kw \in \mathscr{AC}([0,T];V_A+V_B)$, which implies $Kw\in \mathscr{C}_w([0,T];V_B)$. Because of \eqref{GleichungK}, we also have $(Kw)'\in\mathscr{C}_w([0,T];H)$. This justifies evaluating $\Vert w(t)\Vert_H$, $\Vert Kw(t)\Vert_B$, and $\Vert (Kw)'(t)\Vert_H$ for fixed $t$.

Further, we remark that $Kw-u_0\in L^\infty(0,T;V)$ since $w\in L^p(0,T; V_A)\cap L^\infty(0,T;H)$. Due to \eqref{GleichungK}, we thus have $(Kw)'\in L^p(0,T;V_A)\cap L^\infty(0,T;H)$. Differentiating \eqref{GleichungK} yields
\begin{equation*}
  (Kw)''(t)=\lambda \left( w'(t)-(Kw)'(t) \right)
\end{equation*}
for almost all $t\in (0,T)$. Therefore, we have
\begin{equation*}
  \begin{split}
   \langle w'+BKw,w\rangle = \left\langle \frac{1}{\lambda} (Kw)''+(Kw)'+BKw,\frac{1}{\lambda} (Kw)'+Kw-u_0\right\rangle.
  \end{split}
\end{equation*}
Since $\frac{1}{\lambda} (Kw)''+(Kw)'+BKw\in L^{p'}(0,T; V_A')+L^1(0,T;H)$ and $Kw-u_0\in L^\infty(0,T;V)$, we are allowed to split the right-hand side into
\begin{equation} \label{3parts}
  \begin{split}
    &\left\langle \frac{1}{\lambda} (Kw)''+(Kw)'+BKw,\frac{1}{\lambda} (Kw)'\right\rangle +\left\langle \frac{1}{\lambda} (Kw)''+(Kw)'+BKw,Kw-u_0\right\rangle\\
    &= \left\langle \frac{1}{\lambda} (Kw)''+BKw,\frac{1}{\lambda} (Kw)'\right\rangle +\frac{1}{\lambda} \Vert (Kw)'\Vert^2_{L^2(0,T;H)} +\left\langle \frac{1}{\lambda} (Kw)''+(Kw)'+BKw,Kw-u_0\right\rangle.\\
  \end{split}
\end{equation}
For the first term on the right-hand side, we apply the result of Strauss \cite[Theorem 4.1]{Strauss66} providing the integration-by-parts formula
\begin{equation*}
  \begin{split}
    \left\langle \frac{1}{\lambda} (Kw)''+BKw,\frac{1}{\lambda} (Kw)'\right\rangle = \frac{1}{2\lambda^2} \left( \Vert (Kw)'(T)\Vert^2_H- \Vert (Kw)'(0)\Vert^2_H + \lambda \Vert (Kw)(T)\Vert^2_B - \lambda \Vert u_0\Vert^2_B \right).
  \end{split}
\end{equation*}
Since $(Kw)'\in L^p(0,T;V_A)\cap L^\infty(0,T;H)$, $BKw\in L^\infty(0,T;V_B')$, and $Kw-u_0\in L^\infty(0,T;V)$, the third term on the right-hand side of \eqref{3parts} yields
\begin{equation*}
  \begin{split}
    &\left\langle \frac{1}{\lambda} (Kw)''+(Kw)'+BKw,Kw-u_0\right\rangle \\
    &= \left\langle \frac{1}{\lambda} (Kw)'',Kw-u_0\right\rangle+\left\langle(Kw)',Kw-u_0\right\rangle+\left\langle BKw,Kw-u_0\right\rangle \\
    &= \frac{1}{\lambda}\left\langle (Kw-u_0)'(T), (Kw-u_0)(T)\right\rangle - \frac{1}{\lambda}\left\langle (Kw-u_0)'(0), (Kw-u_0)(0)\right\rangle -\frac{1}{\lambda} \Vert (Kw-u_0)'\Vert^2_{L^2(0,T;H)} \\
    &\hspace{1.5cm} +\frac{1}{2} \Vert (Kw)(T)-u_0\Vert^2_H -\frac{1}{2} \Vert (Kw)(0)-u_0\Vert^2_H +\Vert Kw\Vert^2_{L^2(0,T;B)} - \left\langle BKw, u_0\right\rangle.
  \end{split}
\end{equation*}
Altogether, we have
\begin{equation}\label{partInt}
  \begin{split}
    \langle w'+BKw,w\rangle &= \frac{1}{2\lambda^2} \left( \Vert (Kw)'(T)\Vert^2_H- \Vert (Kw)'(0)\Vert^2_H + \lambda \Vert (Kw)(T)\Vert^2_B - \lambda \Vert u_0\Vert^2_B \right) \\
    &\hspace{1.5cm} +\frac{1}{\lambda}\left\langle (Kw-u_0)'(T), (Kw-u_0)(T)\right\rangle +\frac{1}{2} \Vert (Kw)(T)-u_0\Vert^2_H \\
    &\hspace{6cm}+\Vert Kw\Vert^2_{L^2(0,T;B)} - \left\langle BKw, u_0\right\rangle.
  \end{split}
\end{equation}
Due to \eqref{GleichungK}, there holds
\begin{equation*}
  \begin{split}
    \frac{1}{2} \Vert w(T)\Vert^2_H = \frac{1}{2\lambda^2} \Vert (Kw)'(T)\Vert^2_H + \frac{1}{2} \Vert (Kw)(T)-u_0\Vert^2_H + \frac{1}{\lambda}\left\langle (Kw)'(T), (Kw)(T)-u_0\right\rangle
  \end{split}
\end{equation*}
as well as
\begin{equation*}
  \frac{1}{2} \Vert w(0)\Vert^2_H = \frac{1}{2\lambda^2} \Vert(Kw)'(0)\Vert^2_H.
\end{equation*}
Inserting this into \eqref{partInt} yields the desired integration-by-parts formula for $t=T$. However, everything above remains true for an arbitrary $t\in [0,T]$, which proves the assertion.\qed
\end{proof}

\begin{proof}[of Theorem~\ref{MainThm}]
For simplicity, we write $\tau$ and $N$ instead of $\tau_\ell$ and $N_\ell$, respectively, during this proof. Since, by assumption, $v_\ell^0\to v_0$ in $H$, $u^0_\ell\to u_0$ in $V_B$, and since $f_\ell\to f$ in $L^{p'}(0,T;V_A') + L^1(0,T;H)$,
the right-hand side of the a priori estimate (\ref{apriori}) stays bounded as $\ell \to \infty$.
Since
\begin{gather*}
    \| v_\ell \|_{L^\infty(0,T;H)} = \max_{n= 1, 2, \dots , N } \|v^n\|_H , \quad \quad
    \|\hat{v}_\ell\|_{L^\infty(0,T;H)} = \max_{n= 0, 1, \dots , N } \|v^n\|_H , \quad \\
    \|v_\ell\|_{L^p(0,T;V_A)} = \left( \tau \sum_{n=1}^{N} \|v^n\|_{V_A}^p\right)^{1/p} ,
\end{gather*}
the a priori estimate (\ref{apriori}) implies
that the sequence $\{v_\ell\}$ is bounded in $L^\infty(0,T;H)\cap L^p(0,T;V_A)$,  the sequence $\{\hat v_\ell\}$ is bounded in $L^\infty(0,T;H)$, and -- in view of (\ref{K_ell}) -- the sequence
$\{K_\ell v_\ell\}$ is bounded in $L^\infty(0,T;V_B)$.
Due to the growth condition on $A$, we also conclude that the sequence
$\{A v_\ell\}$ is bounded in $L^{p'}(0,T;V_A')$.

Since $L^\infty(0,T;H)$ and $L^\infty(0,T;V_B)$ are the duals of the separable normed spaces $L^1(0,T;H)$ and $L^1(0,T;V_B')$, respectively, and since $L^p(0,T;V_A)$ as well as $L^{p'}(0,T;V_A')$ are reflexive Banach spaces, there exists a common subsequence, again denoted by $\ell$, and elements
$v\in L^\infty(0,T;H)\cap L^p(0,T;V_A)$, $\hat v \in L^\infty(0,T;H)$,
$u\in L^\infty(0,T;V_B)$, $a\in L^{p'}(0,T;V_A')$ such that,
as $\ell\to \infty$,
\begin{align*}
& v_{\ell} \wsconv v \quad\textnormal{in } L^\infty (0,T;H) , \quad
v_{\ell} \wconv v \quad\textnormal{in } L^p (0,T;V_A) , \quad
\hat v_{\ell} \wsconv \hat v \quad\textnormal{in } L^\infty (0,T;H) ,\\
& K_{\ell} v_{\ell} \wsconv u \quad\textnormal{in } L^\infty (0,T;V_B) , \quad Av_{\ell} \wconv a \quad\textnormal{in } L^{p'} (0,T;V_A') ,
\end{align*}
see, e.g., Br\'ezis~\cite[Coroll.~III.26, Thm. III.27]{Brezis}.
Since $B:V_B \to  V_B'$ is linear and bounded,  we also find that $BK_{\ell} v_{\ell} \wsconv Bu$ in $L^\infty(0,T;V_B')$.

A short calculation shows that
\begin{equation*}
\|v_{\ell} - \hat v_{\ell}\|_{L^2(0,T;H)}^2 = \frac{\tau}{3}\, \sum_{j=1}^{N} \|v^j - v^{j-1}\|_H^2 ,
\end{equation*}
which tends to $0$ as ${\ell}\to \infty$ because of the a priori estimate (\ref{apriori}). Hence $v=\hat v$.

Let us show that $u=Kv$ and thereby $Kv\in L^\infty(0,T;V_B)$.
We already know that $K_{\ell} v_{\ell} \wsconv u$ in $L^\infty(0,T;V_B)$ and thus $K_{\ell} v_{\ell} \wconv u$ in $L^2(0,T;H)$.
The integral operator $\hat{K}$ defined by $\hat{K} w := Kw - u_0$
can easily be shown to be a linear, bounded and thus
weakly-weakly continuous mapping of $L^2 (0,T;H)$ into itself (see also Lemma~\ref{LemmaK}). Therefore, in view of $v_{\ell} \wsconv v$ in $L^\infty(0,T;H)$ and thus $v_{\ell} \wconv v$ in $L^2(0,T;H)$, we
find that $Kv_{\ell} - Kv = \hat{K} v_{\ell} - \hat{K} v  \wconv 0$ in $L^2(0,T;H)$.
Because of
\begin{equation*}
 u - Kv = u - K_{\ell} v_{\ell} + K_{\ell} v_{\ell} - Kv_{\ell}
 + Kv_{\ell} - Kv ,
\end{equation*}
it thus remains to prove that $K_{\ell} v_{\ell} - Kv_{\ell}$ converges at least weakly in $L^2(0,T;H)$ to $0$ in order to prove $u=Kv$.
For $w\in L^2(0,T;H)$ and $t\in (t_{n-1}, t_n]$ ($n = 1, 2, \dots, N$), we have
\begin{equation*}
(K_{\ell} w) (t) - (Kw) (t)= u^0_\ell-u_0 +\int_0 ^{t_n}\Big(k(t_n-s) - k(t-s)\Big)\;\! w(s) \diff s +\int_t^{t_n} k(t-s)w(s)\diff s .
\end{equation*}
Since $|k(t-s)| = \lambda e^{-\lambda(t-s)} \leq \lambda e^{\lambda\tau}$ for $s\in(t,t_n)\subset (t_{n-1},t_n)$ as well as
\begin{equation*}
\vert k(t_n-s) - k(t-s)\vert \leq \lambda\;\! \big(e^{\lambda \tau} -1\big) \leq c \;\! \tau \quad\text{for } s\in (0,t_n),
\end{equation*}
the Cauchy--Schwarz inequality yields
\begin{equation*}
\|K_{\ell} w - K w\|_{L^2(0,T;H)} \le \sqrt{T}\Vert u^0_\ell-u_0\Vert_H+ c \tau \, \|w\|_{L^2(0,T;H)} .
\end{equation*}
Since $u^0_\ell\to u_0$ in $V_B$ and $v_{\ell}$ is bounded in $L^2(0,T;H)$, we immediately get
$K_{\ell} v_{\ell} - Kv_{\ell} \to 0$ in $L^2(0,T;H)$ as $\ell\to\infty$ (and thus $\tau\to0$), which is the last step to prove $u=Kv \in L^\infty(0,T;V_B)$.

With the help of the piecewise constant and piecewise linear interpolation of the corresponding grid functions, we can now rewrite the numerical scheme (\ref{scheme1}) as
\begin{equation}\label{P_disc2}
 \hat v_{\ell}' + Av_{\ell} + BK_{\ell} v_{\ell}=f_{\ell} .
\end{equation}
For any $w\in V$ and $\phi \in \mathscr{C}_c^\infty (0,T)$, we thus obtain
\begin{equation*}
-\int^T_0 \langle \hat v_{\ell} (t), w\rangle\;\! \phi'(t)\diff t = \int^T_0 \Big\langle f_{\ell}(t) - Av_{\ell} (t)-(BK_{\ell} v_{\ell}) (t) , w\Big\rangle\;\! \phi(t) \diff t .
\end{equation*}
Since $f_{\ell} \to f$ in $L^{p'}(0,T;V_A') + L^1(0,T;H)$, and employing the (weak and weak*) convergence just shown, we can pass to the limit in the foregoing equation to come up with
\begin{equation*}
-\int^T_0 \langle v(t), w\rangle\;\! \phi'(t)\diff t = \int^T_0 \Big\langle f(t) - a(t)-(BK v)(t) , w\Big\rangle\;\! \phi(t)\diff t .
\end{equation*}
This shows that $v$ is differentiable in the weak sense with
\begin{equation}\label{P2}
v'=f-a-BKv \in L^{p'}(0,T;V_A')+ L^\infty(0,T;V_B')+ L^1(0,T;H) .
\end{equation}

Let us summarise the regularity properties of the limits we have proven so far: As $v\in  L^p(0,T;V_A) \cap L^\infty(0,T;H)$ and $v'\in L^{p'}(0,T;V_A')+ L^\infty(0,T;V_B')+L^1(0,T;H) \subset L^1(0,T;V')$, we conclude that
$v\in \mathscr{AC}([0,T];V')$. This implies $v\in \mathscr{C}_w([0,T]; H)$ (see Lions and Magenes~\cite[Ch.~3, Lemma~8.1]{LionsMagenes}).
Likewise, from $Kv\in L^\infty(0,T;V_B)$ and $(Kv)'=\lambda v-\lambda Kv + \lambda u_0\in L^{p}(0,T;V_A) + L^\infty(0,T;V_B) \subset L^1(0,T;V_A+V_B)$ (see (\ref{GleichungK})), we deduce that $Kv\in \mathscr{AC}([0,T]; V_A+V_B)$ and thus  $Kv\in \mathscr{C}_w([0,T]; V_B)$. We remark that, in addition, one can easily show that $Kv-u_0\in W^{1,p}(0,T;V_A)$.

By construction, we have $\hat v_{\ell} (0)=v_{\ell}^0 \to v_0$ in $H$ and $\hat v_{\ell} (T) = v^{N}$. In view of the a priori estimate (\ref{apriori}), the sequence $\{\hat v_{\ell} (T)\}$ is bounded in $H$ and hence possesses a subsequence, again denoted by $\ell$, that converges weakly in $H$ to an element $v_T\in H$. We shall show that $v_T = v(T)$.

For any $\phi\in \mathscr{C}^1([0,T])$, $w\in V$, we test (\ref{P2}) with the function $t\mapsto \phi(t)w \in \mathscr{C}^1([0,T];V)$, which allows us to integrate by parts. Together with (\ref{P_disc2}), we find
\begin{equation} \label{endpoint_v}
  \begin{aligned}
    &  \Big(v(T),w\Big)\;\!\phi(T) - \Big(v(0), w\Big)\;\!\phi(0) = \int_0^T \Big\langle v'(t), w\Big\rangle\;\! \phi(t) \diff t + \int_0^T\Big \langle v(t), w\Big\rangle \;\!\phi'(t)\diff t\\
    &=\int_0^T \Big\langle f(t) -f_{\ell}(t) + Av_{\ell}(t) - a(t) + (BK_{\ell}v_{\ell})(t) - (BKv)(t), w\Big\rangle\;\! \phi(t) \diff t\\
    &\phantom{MMMMMM}+ \int_0^T \langle v(t), w\rangle\;\! \phi'(t) \diff t+ \int_0^T \langle \hat v_{\ell}'(t), w\rangle\;\! \phi(t) \diff t \\
    &= \int_0^T \Big\langle f(t) -f_{\ell}(t) + Av_{\ell}(t) - a(t) + (BK_{\ell}v_{\ell})(t) - (BKv)(t), w\Big\rangle\;\! \phi(t) \diff t\\
    &\phantom{MMMMMM}+ \int_0^T \langle v(t) - \hat v_{\ell}(t), w\rangle\;\! \phi'(t) \diff t+ \Big(\hat v_{\ell}(T), w\Big)\;\! \phi(T) - \Big(\hat v_{\ell}(0), w\Big)\;\! \phi(0) .
  \end{aligned}
\end{equation}
The first two terms on the right-hand side tend to zero as $\ell\to \infty$. This yields
\begin{equation*}
\Big(v(T),w\Big)\;\!\phi(T) - \Big(v(0), w\Big)\;\!\phi(0)=\Big( v_T, w\Big)\;\! \phi(T) - \Big( v_0, w\Big) \;\!\phi(0) \quad \textnormal{for all }w\in V .
\end{equation*}
Choosing $\phi$ such that $\phi(T)=0$ and $ \phi(0)=0$, respectively, shows that $v(0)= v_0$ and  $v(T)=v_T$, respectively.

By definition, we have $(K_{\ell} v_{\ell}) (0) =u^0_\ell \to u_0$ in $V_B$ and $(K_{\ell} v_{\ell} )(T) = \left(K^\tau\!\textsubscript{\hspace{-0.05cm}$u^0_\ell$}\;\! v\right)^{N}$. Again, the a priori estimate yields the boundedness of $\{(K_{\ell} v_{\ell}) (T)\}$ in $V_B$ and thus the existence of a weakly convergent subsequence, again denoted by $\ell$, and an element $u_T\in V_B$ such that $ (K_{\ell} v_{\ell} )(T)\wconv u_T$ in $V_B$.

We shall show that $u_T=(Kv)(T)$. Analogously to \eqref{endpoint_v}, choosing $\phi(t)=\frac{t}{T}$, $t\in [0,T]$, we have for any $w\in V$
\begin{equation*}
  \Big( (Kv)(T), w \Big) = \int_0^T \Big\langle (Kv)'(t), w\Big\rangle \frac{t}{T} \diff t + \int_0^T\Big \langle (Kv)(t), w\Big\rangle \frac{1}{T}\diff t.
\end{equation*}
Using \eqref{GleichungK}, we obtain
\begin{equation} \label{endpoint_Kv}
  \begin{split}
    \Big( (Kv)(T), w \Big) &= \lambda \int_0^T \Big\langle v(t)-((Kv)(t)- u_0), w\Big\rangle \frac{t}{T} \diff t + \int_0^T\Big \langle (Kv)(t), w\Big\rangle \frac{1}{T}\diff t \\
    &=\lim_{\ell\to\infty} \left(\lambda \int_0^T \Big\langle v_{\ell}(t)-\big( (K_{\ell}v_{\ell})(t)- u^0_\ell\big), w\Big\rangle \frac{t}{T} \diff t + \int_0^T\Big \langle (K_{\ell}v_{\ell})(t), w\Big\rangle \frac{1}{T}\diff t \right) .
  \end{split}
\end{equation}
For the first integral, \eqref{KFunkDisk} yields
\begin{equation*}
  \begin{split}
    \int_0^T \Big\langle v_{\ell}(t) -\big( (K_{\ell}v_{\ell})(t)- u^0_\ell\big),w \Big\rangle \frac{t}{T} \diff t &= \sum_{j=1}^{N} \int_{t_{j-1}}^{t_j} \left\langle v^j-\left(\left(K^{\tau}_{u^0_\ell}v\right)^j - u^0_\ell\right),w\right\rangle\frac{t}{T} \diff t \\
    &= \sum_{j=1}^{N} \int_{t_{j-1}}^{t_j} \frac{1}{e^{\lambda\tau}-1} \left\langle \left(K^{\tau}_{u^0_\ell}v\right)^j -\left(K^\tau_{u^0_\ell} v\right)^{j-1},w\right\rangle\frac{t}{T} \diff t. \\
  \end{split}
\end{equation*}
For better readability, we write $K^\tau$ instead of $K^\tau\!\textsubscript{\hspace{-0.05cm}$u^0_\ell$}$ for the rest of this proof. Splitting up the sum and shifting the index in the second sum yields
\begin{equation*}
  \begin{split}
    &\sum_{j=1}^{N} \int_{t_{j-1}}^{t_j} \Big\langle \left(K^{\tau}v\right)^j -\left(K^\tau v\right)^{j-1},w\Big\rangle\frac{t}{T} \diff t \\
    &= \langle (K^\tau v)^{N},w\rangle \int_{t_{N-1}}^{t_{N}} \frac{t}{T} \diff t - \langle u^0_\ell,w\rangle \int_0^\tau \frac{t}{T} \diff t + \sum_{j=1}^{N-1} \langle (K^\tau v)^j,w\rangle \left(\int_{t_{j-1}}^{t_{j}} \frac{t}{T} \diff t - \int_{t_{j}}^{t_{j+1}} \frac{t}{T} \diff t  \right) \\
    &= \frac{(2T-\tau)\;\!\tau}{2T}\;\! \langle (K^\tau v)^{N},w\rangle - \frac{\tau^2}{2T}\;\! \langle u^0_\ell,w\rangle - \frac{\tau^2}{T} \sum_{j=1}^{N} \langle (K^\tau v)^j,w\rangle + \frac{\tau^2}{T} \;\! \langle (K^\tau v)^N,w\rangle \\
    &= \left( \frac{(2T-\tau)\;\!\tau}{2T} + \frac{\tau^2}{T} \right) \langle (K_{\ell}v_{\ell})(T),w\rangle - \frac{\tau^2}{2T}\;\! \langle u^0_\ell,w\rangle - \frac{\tau}{T} \int_0^T \langle (K_{\ell}v_{\ell})(t),w\rangle \diff t.
  \end{split}
\end{equation*}
Inserting this into \eqref{endpoint_Kv} and using that $\big\{u^0_\ell\big\}$ is bounded in $V_B$, $(K_{\ell}v_{\ell})(T)\rightharpoonup u_T$ in $V_B$, $K_{\ell}v_{\ell}\wsconv Kv$ in $L^\infty(0,T;V_B)$ as well as $\lambda\tau\;\! (e^{\lambda\tau}-1)^{-1}\to 1$ as $\ell\to\infty$ (and thus $\tau\to0$), we end up with
\begin{equation*}
  \begin{split}
    \Big( (Kv)(T), w \Big) &= \lim_{\ell\to\infty}\left( \frac{\lambda\tau}{e^{\lambda\tau}-1} \left( \left( \frac{2T-\tau}{2T} +\frac{\tau}{T} \right) \langle (K_{\ell}v_{\ell})(T),w\rangle - \frac{\tau}{2T}\;\! \langle u^0_\ell,w\rangle  \right. \right. \\
    &\hspace{4cm} \left.\left. - \frac{1}{T} \int_0^T \langle (K_{\ell}v_{\ell})(t),w\rangle \diff t \right) + \int_0^T\Big \langle (K_{\ell}v_{\ell})(t), w\Big\rangle \frac{1}{T}\diff t \right) \\
    &= \langle u_T,w\rangle,
  \end{split}
\end{equation*}
which proves $u_T=(Kv)(T)$.

It remains to show that $a$ equals $Av$. We recall that $v_{\ell} \in L^\infty(0,T;V)$. Hence for arbitrary $w\in L^p(0,T;V_A)$, the monotonicity of $A$ implies
\[\begin{split}
   \langle Av_{\ell} , v_{\ell}  \rangle &=   \langle A v_{\ell} - A w, v_{\ell} -w  \rangle +   \langle Aw , v_{\ell} -w \rangle  +   \langle Av_{\ell} , w \rangle \geq   \langle Aw, v_{\ell}-w \rangle  +   \langle Av_{\ell}, w \rangle .
\end{split}\]
The right-hand side converges to
\[  \langle Aw , v-w \rangle +  \langle a, w \rangle .\]
This already implies
\[\liminf_{{\ell}\to\infty} \;\! \langle Av_{\ell} , v_{\ell}   \rangle  \geq \langle Aw  , v -w  \rangle  + \langle a  , w  \rangle .\]
We will show below
\begin{equation}\label{Minty1}
 \limsup_{{\ell}\to\infty} \;\!  \langle Av_{\ell}  , v_{\ell}   \rangle  \leq  \langle a  , v  \rangle,
\end{equation}
hence altogether proving
\begin{equation}\label{Minty2}
 \langle Aw   - a , v -w  \rangle \leq 0.
\end{equation}
Taking $w=v\pm rz$ for any $z\in L^p(0,T;V_A)$, $r>0$, and passing to the limit $r\to 0$, Lebesgue's theorem on dominated convergence together with the hemicontinuity and the growth condition of $A\colon V_A\to V_A'$ yield
\[
 \langle Av   - a , z  \rangle  = 0 \quad \textnormal{for all }z\in L^p(0,T;V_A),
\]
which proves the assertion.

To finish the proof of existence, it remains to show (\ref{Minty1}).
We start with
\begin{equation} \label{Av_lv_l}
   \langle Av_{\ell} , v_{\ell}  \rangle   =    \langle f_{\ell} , v_{\ell}  \rangle   -    \langle \hat v_{\ell}' , v_{\ell}  \rangle   -    \langle BK_{\ell}v_{\ell} , v_{\ell}  \rangle  .
\end{equation}
Since $f_{\ell}\to f$ in $L^{p'}(0,T;V_A')+L^1(0,T;H)$, the first term on the right-hand side tends to $  \langle f , v \rangle  $ as $\ell\to\infty$.
Using (\ref{aba}), we estimate the second term as
\begin{equation*}
  \left\langle \hat v_{\ell}' , v_{\ell} \right\rangle   = \sum_{j=1}^{N} \Big(v^j-v^{j-1}, v^j\Big) \geq \frac{1}{2}\| v^{N}\|_H^2 - \frac{1}{2}\| v^0_l\|_H^2.
\end{equation*}
As $v_l^0\to v_0=v(0)$ and $v^{N}\wconv v_T=v(T)$ in $H$, the weak lower semicontinuity of the norm provides
\[\liminf_{{\ell}\to\infty}    \Big\langle \hat v_{\ell}' , v_{\ell} \Big\rangle    \geq \frac{1}{2}\| v(T)\|_H^2 - \frac{1}{2}\| v(0)\|_H^2.\]
Regarding the last term on the right-hand side of \eqref{Av_lv_l}, note that only $v_{\ell} \wconv v$ in $L^p(0,T; V_A)$ and $BK_{\ell} v_{\ell} \wconv BKv$ in $L^2(0,T; V_B')$. Therefore, a finer examination of that term is necessary. To start with, a short calculation (see (\ref{KFunkDisk})) shows that
\[v^n = \frac{e^{-\lambda \tau}}{\tau \gamma_1} ((K^{\tau} v)^n - (K^{\tau} v)^{n-1}) + \frac{1-e^{-\lambda \tau}}{\tau \gamma_1} (K^{\tau} v)^n - \frac{1-e^{-\lambda \tau}}{\tau \gamma_1} u^0_\ell.\]
We conclude with (\ref{aba}) that
\begin{equation*}
  \begin{split}
    &  \langle BK_{\ell}v_{\ell} , v_{\ell} \rangle   = \tau  \sum_{j=1}^{N} \Big\langle (BK^{\tau} v)^j, v^j\Big\rangle\\
    &=  \frac{e^{-\lambda \tau}}{\gamma_1} \sum_{j=1}^{N} \Big\langle (BK^{\tau} v)^j, (K^{\tau} v)^j - (K^{\tau} v)^{j-1}\Big\rangle  + \frac{1-e^{-\lambda \tau}}{\gamma_1} \sum_{j=1}^{N} \Big\langle (BK^{\tau} v)^j, (K^{\tau} v)^j \Big\rangle\\
    &\hspace{7.5cm} - \frac{1-e^{-\lambda \tau}}{\gamma_1} \sum_{j=1}^{N}\Big\langle (BK^{\tau} v)^j, u^0_\ell\Big\rangle\\
    &\geq \frac{e^{-\lambda \tau}}{2\gamma_1} \Big(\|(K^{\tau} v)^{N}\|_B^2 -\|(K^{\tau} v)^0\|_B^2\Big)  + \frac{1-e^{-\lambda \tau}}{\tau \gamma_1} \Vert K_{\ell} v_{\ell}\Vert_{L^2(0,T;B)}^2- \frac{1-e^{-\lambda \tau}}{\tau\gamma_1} \int_0^T \Big\langle (BK_{\ell} v_{\ell})(t) , u^0_\ell\Big\rangle \diff t.
  \end{split}
\end{equation*}
Since $(K^{\tau} v)^0= u^0_\ell\to u_0=(Kv)(0)$ in $V_B$ and $(K^{\tau} v)^{N}= (K_{\ell}v_{\ell})(T)\wconv u_T=(Kv)(T)$ in $V_B$ as well as $K_{\ell}v_{\ell}\wconv Kv$ in $ L^2(0,T;V_B)$, the weak lower semicontinuity of the norms involved shows, by passing to the limit and employing $\gamma_1\to \lambda$ (see \eqref{Gamma1}),
\begin{equation*}
  \begin{split}
    &\liminf_{{\ell}\to\infty}  \;\!  \langle BK_{\ell}v_{\ell}, v_{\ell} \rangle    \geq \frac{1}{2\lambda} \|(Kv)(T)\|_{B}^2 - \frac{1}{2\lambda} \|(Kv)(0)\|_{B}^2 +  \Vert Kv\Vert_{L^2(0,T;B)}^2- \int_0^T \Big\langle (BKv)(t), u_0\Big\rangle \diff t.
  \end{split}
\end{equation*}
Altogether, we find
\begin{equation*}
\begin{split}
  &\limsup_{{\ell}\to\infty} \;\!  \langle Av_{\ell} , v_{\ell} \rangle \leq  \langle f, v\rangle -\frac{1}{2}\| v(T)\|_H^2 + \frac{1}{2}\| v(0)\|_H^2 -\frac{1}{2\lambda} \|(Kv)(T)\|_{B}^2 + \frac{1}{2\lambda} \|(Kv)(0)\|_{B}^2 \\
  &\hspace{7cm} -  \Vert Kv\Vert_{L^2(0,T;B)}^2+ \int_0^T \Big\langle (BKv)(t), u_0\Big\rangle \diff t.
\end{split}
\end{equation*}
We replace $f$ by $v'+a+BKv$ and remind that $v\in \mathscr{C}_w([0,T]; H)$ as well as $Kv\in \mathscr{C}_w([0,T]; V_B)$. Thus we are allowed to apply Lemma \ref{PartInt}, which yields (\ref{Minty1}).
%
\qed
\end{proof}

\section{Uniqueness and continuous dependence on the problem data}

\noindent
Using again the integration-by-parts formula of Lemma \ref{PartInt}, we are able to prove a stability and uniqueness result. Further, we are able to prove stability with respect to perturbations of the kernel parameter $\lambda$. In order to do so, we again have to prove an integration-by-parts formula using a different approach that requires less structure than the one used in the proof of Lemma~\ref{PartInt}.

\begin{theorem}[Stability]\label{ThmStability}
Let Assumptions {\bf$(\mathbf{A})$} and {\bf $(\mathbf{B})$} be fulfilled and let $u_0, \hat u_0\in V_B$, $v_0,\hat v_0\in H$, and $f,\hat{f}\in L^{p'}(0,T;V_A')+L^1(0,T;H)$. 
\begin{enumerate}[(i)]
\item Let $f-\hat f\in L^1(0,T;H)$ and let $v$, $\hat  v$ be solutions to (\ref{abstract1}) with data $({u}_0, {v}_0,  f)$ and $(\hat{u}_0, \hat{v}_0, \hat f)$, respectively. Then the stability estimate
\[
\begin{split}
  &\|v(t)-\hat{v}(t)\|_{H}^{2}+\frac{1}{\lambda}\left\| \left(K_{u_0}v\right)(t)-\left(K_{\hat u_0}\hat{v}\right)(t) \right\|_{B}^{2}+\int_0^t \left\|\left(K_{u_0}v\right)(s)-\left(K_{\hat u_0}\hat{v}\right)(s)\right\|_{B}^{2}\diff s\\
  &\leq c\left(\|v_0-\hat v_0\|_H^2 + \|u_0-\hat u_0\|_{V_B}^2 + \|f-\hat f\|^2_{L^1(0,T;H)}\right)
\end{split}
\]
holds for all $t\in [0,T]$, where the index in the notation $K_{u_0}$, $K_{\hat u_0}$ denotes the dependence of the integral operator on $u_0$, $\hat u_0$, respectively.
\item Let $f-\hat f\in L^{p'}(0,T;V_A')$. Assume in addition that $A$ is uniformly monotone in the sense that there is $\mu>0$ such that
\[\langle Av - Aw, v - w \rangle \ge \mu \|v-w\|_{V_A}^p\]
for all $v,w\in V_A$. Let $v$, $\hat  v$ be solutions to (\ref{abstract1}) with data $({u}_0, {v}_0,  f)$ and $(\hat{u}_0, \hat{v}_0, \hat f)$, respectively. Then the stability estimate
\[
\begin{split}
  &\|v(t)-\hat{v}(t)\|_{H}^{2}+{\mu}\int_0^t\|v(s)-\hat{v}(s)\|_{V_A}^{p}ds+\frac{1}{\lambda}\left\| \left(K_{u_0}v\right)(t)-\left(K_{\hat u_0}\hat{v}\right)(t)\right\|_{B}^{2}\\
  &\hspace{0.5cm}+\int_0^t \left\| \left(K_{u_0}v\right)(s)-\left(K_{\hat u_0}\hat{v}\right)(s)\right\|_{B}^{2}\diff s\leq c\Big(\|v_0-\hat v_0\|_H^2 + \|u_0-\hat u_0\|_{V_B}^2 + \|f-\hat f\|^2_{L^{p'}(0,T;V_A')}\Big)
\end{split}
\]
holds for all $t\in [0,T]$.
\end{enumerate}
\end{theorem}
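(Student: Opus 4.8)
\noindent
The plan is to reduce both parts to a single application of the integration-by-parts formula of Lemma~\ref{PartInt}. Put $w:=v-\hat v$. Since the convolution in \eqref{Volterra} is linear, the difference of the two memory terms is again a memory term evaluated at $w$, namely
\begin{equation*}
(K_{u_0}v)(t)-(K_{\hat u_0}\hat v)(t)=(u_0-\hat u_0)+\int_0^t k(t-s)\,w(s)\diff s=(K_{u_0-\hat u_0}w)(t),
\end{equation*}
but now with the generically nonzero initial value $u_0-\hat u_0$. Subtracting the two equations \eqref{abstract1a} gives $w'+BK_{u_0-\hat u_0}w=(f-\hat f)-(Av-A\hat v)$. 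Because $v,\hat v\in L^p(0,T;V_A)\cap\mathscr{C}_w([0,T];H)$ and $K_{u_0}v,K_{\hat u_0}\hat v\in\mathscr{C}_w([0,T];V_B)$, the function $w$ belongs to $L^p(0,T;V_A)\cap L^\infty(0,T;H)$ with $K_{u_0-\hat u_0}w\in L^\infty(0,T;V_B)$, and the right-hand side lies in $L^1(0,T;H)+L^{p'}(0,T;V_A')$ in case~(i) (by the assumption on $f-\hat f$ together with the growth condition~$(\mathbf{A})$(iv)) and in $L^{p'}(0,T;V_A')$ in case~(ii). Hence Lemma~\ref{PartInt}, with $u_0$ replaced by $u_0-\hat u_0$, is applicable to $w$, and this is the only point where the comparatively weak time regularity of generalised solutions has to be handled with care.

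Testing the equation for $w$ by $w$, integrating over $[0,t]$, and using Lemma~\ref{PartInt} for the left-hand side, I would arrive at
\begin{equation*}
\begin{split}
&\tfrac12\|w(t)\|_H^2+\tfrac1{2\lambda}\|(K_{u_0-\hat u_0}w)(t)\|_B^2+\int_0^t\|(K_{u_0-\hat u_0}w)(s)\|_B^2\diff s\\
&\quad=\tfrac12\|w(0)\|_H^2+\tfrac1{2\lambda}\|u_0-\hat u_0\|_B^2+\int_0^t\langle(BK_{u_0-\hat u_0}w)(s),u_0-\hat u_0\rangle\diff s\\
&\qquad+\int_0^t\langle(f-\hat f)(s),w(s)\rangle\diff s-\int_0^t\langle(Av-A\hat v)(s),w(s)\rangle\diff s.
\end{split}
\end{equation*}
Here monotonicity of $A$ renders the last term nonpositive in case~(i), while uniform monotonicity bounds it by $-\mu\int_0^t\|w(s)\|_{V_A}^p\diff s$ in case~(ii), which is moved to the left. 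The cross term involving $u_0-\hat u_0$ — absent in the proof of Theorem~\ref{MainThm} precisely because there the memory operator started from $u_0$ on both sides — is estimated by the Cauchy–Schwarz inequality for the bilinear form induced by $B$ together with Young's inequality, as $\le\tfrac12\int_0^t\|(K_{u_0-\hat u_0}w)(s)\|_B^2\diff s+\tfrac T2\|u_0-\hat u_0\|_B^2$, and its first part is absorbed on the left. The data term is estimated by $\|f-\hat f\|_{L^1(0,t;H)}\|w\|_{L^\infty(0,t;H)}$ in case~(i) and by $\|f-\hat f\|_{L^{p'}(0,t;V_A')}\|w\|_{L^p(0,t;V_A)}$ in case~(ii), each followed by Young's inequality (in case~(ii) the resulting $L^p(0,t;V_A)$-contribution being absorbed against $\mu\int_0^t\|w\|_{V_A}^p$).

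In case~(ii) nothing further has to be absorbed, so — invoking the equivalence of $\|\cdot\|_B$ and $\|\cdot\|_{V_B}$ — the claimed estimate follows for every $t\in[0,T]$. In case~(i) the bound on the data term leaves a term $\varepsilon\sup_{s\in[0,t]}\|w(s)\|_H^2$ on the right (recall $w\in\mathscr{C}_w([0,T];H)$, so this supremum is finite); taking the supremum over $t'\in[0,t]$ of the whole inequality and choosing $\varepsilon$ small enough, say $\varepsilon=\tfrac14$, one absorbs $\sup_{t'\le t}\|w(t')\|_H^2$, and reinserting the resulting bound into the inequality at a fixed $t$ yields the assertion. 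No Gronwall argument is required, since the positive-type memory term prevents any contribution of the form $\int_0^t\|w(s)\|_H^2\diff s$ on the right. Finally, choosing identical data for $v$ and $\hat v$ forces $w\equiv0$, which gives uniqueness. The main obstacle is thus not analytic depth but the bookkeeping: verifying the hypotheses of Lemma~\ref{PartInt} for $w$ and correctly handling the new cross term and the absorption.
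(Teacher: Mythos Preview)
Your proposal is correct and follows essentially the same approach as the paper: subtract the two equations, observe that the difference of the memory terms is $K_{u_0-\hat u_0}w$, apply Lemma~\ref{PartInt} to $w=v-\hat v$, drop or move the $A$-term via (uniform) monotonicity, absorb the cross term $\int_0^t\langle BK_{u_0-\hat u_0}w,\,u_0-\hat u_0\rangle$ by Cauchy--Schwarz/Young, and in case~(i) use the $\sup$-absorption trick exactly as in the proof of Theorem~\ref{aprioriestimate}. Your write-up is in fact slightly more careful than the paper's in explicitly checking the hypotheses of Lemma~\ref{PartInt} for $w$ and in spelling out the Young-inequality absorption of $\|w\|_{L^p(0,t;V_A)}$ in case~(ii).
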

\begin{proof}
 Let $v$, $\hat v$ be solutions for  $({u}_0, {v}_0,  f)$,  $(\hat{u}_0, \hat{v}_0, \hat f)$, respectively. The difference of both equations \eqref{abstract1a} then reads
\begin{equation}\label{EqStab1}
(v-\hat v)' + Av - A\hat v + B\left(K_{u_0} v-K_{\hat u_0}\hat v\right)= f-\hat f.
\end{equation}
Lemma \ref{PartInt} implies for all $t\in [0,T]$
\[
 \begin{split}
  &\int^t_0 \Big\langle (v-\hat v)'(s) +B\left(\left(K_{u_0} v\right)(s)-\left(K_{\hat u_0}\hat v\right)(s)\right) , (v-\hat v)(s)\Big\rangle \diff s \\
  &= \frac{1}{2}\|v(t)-\hat v(t)\|^2_H - \frac{1}{2}\|v_0-\hat v_0\|^2_H + \frac{1}{2\lambda}\left\|\left(K_{u_0} v\right)(t)-\left(K_{\hat u_0}\hat v\right)(t)\right\|^2_{B} -\frac{1}{2\lambda}\|u_0-\hat u_0\|^2_{B}  \\
  &\phantom{MMM}- \int_0^t \left\langle B\left(\left(K_{u_0} v\right)(s)-\left(K_{\hat u_0}\hat v\right)(s)\right), u_0-\hat u_0\right\rangle \diff s + \int_0^t \left\|\left(K_{u_0} v\right)(s)-\left(K_{\hat u_0}\hat v\right)(s)\right\|^2_{B}\diff s.
\end{split}
\]
We test in equation (\ref{EqStab1}) with $v-\hat v$. To prove (i), the monotonicity of $A$ and Young's inequality imply
\[
 \begin{split}
  &\frac{1}{2}\|v(t)-\hat v(t)\|^2_H  + \frac{1}{2\lambda}\left\|\left(K_{u_0} v\right)(t)-\left(K_{\hat u_0}\hat v\right)(t)\right\|^2_{B}    + \int_0^t \left\|\left(K_{u_0} v\right)(s)-\left(K_{\hat u_0}\hat v\right)(s)\right\|^2_{B}\diff s  \\
  &\leq\frac{1}{2}\|v_0-\hat v_0\|^2_H + \frac{1}{2\lambda}\|u_0-\hat u_0\|^2_{B} \\
  &\phantom{MMM}+ \int_0^t \left\langle B\left(\left(K_{u_0} v\right)(s)-\left(K_{\hat u_0}\hat v\right)(s)\right), u_0-\hat u_0\right\rangle \diff s + \int_0^t \Big\langle (f-\hat f)(s), (v-\hat v)(s)\Big\rangle \diff s\\
  &\leq c\left(\|v_0-\hat v_0\|^2_H +\|u_0-\hat u_0\|^2_{B} + \|f-\hat f\|_{L^1(0,T;H)}^2\right)\\
  &\phantom{MMM}+ \frac{1}{2}\int_0^t \left\|\left(K_{u_0} v\right)(s)-\left(K_{\hat u_0}\hat v\right)(s)\right\|^2_B \diff s +\frac{1}{4}\|v-\hat v\|_{L^\infty(0,T;H)}^2.
\end{split}
\]
An analogous argument to the one used in the proof of Theorem \ref{aprioriestimate} yields the first statement.
The second one follows analogously from the uniform monotonicity of $A$ and
\[
\int_0^t \Big\langle (f-\hat f)(s), (v-\hat v)(s)\Big\rangle \diff s\leq \|f-\hat f\|_{L^{p'}(0,T;V_A')} \int_0^t\|(v-\hat v)(s)\|_{V_A}^p \diff s.
\]
\qed
\end{proof}

As usually, the stability estimates directly provide a uniqueness result, which we formulate in the following corollary.

\begin{cor}[Uniqueness]\label{ThmUniqueness} Under the assumptions of Theorem \ref{MainThm}, the solution to problem (\ref{abstract1}) is unique. Furthermore, the whole sequences $\{v_{\ell}\}$ and $\{\hat{v}_{\ell}\}$ of piecewise constant and of piecewise affine-linear prolongations of solutions to the discrete problem (\ref{scheme}) converge to the solution in the sense stated in Theorem~\ref{MainThm}.
\end{cor}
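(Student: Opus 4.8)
The plan is to obtain both claims from the stability estimate in Theorem~\ref{ThmStability}(i) together with a routine subsequence argument. For uniqueness I would take two solutions $v,\hat v$ of~(\ref{abstract1}) corresponding to the same data $(u_0,v_0,f)$, each with the regularity guaranteed by Theorem~\ref{MainThm}. Then $f-\hat f=0\in L^1(0,T;H)$, so Theorem~\ref{ThmStability}(i) applies with $(\hat u_0,\hat v_0,\hat f)=(u_0,v_0,f)$ and gives
\[
\|v(t)-\hat v(t)\|_H^2+\frac{1}{\lambda}\,\|(K_{u_0}v)(t)-(K_{u_0}\hat v)(t)\|_B^2+\int_0^t\|(K_{u_0}v)(s)-(K_{u_0}\hat v)(s)\|_B^2\diff s\le 0
\]
for all $t\in[0,T]$, hence $v=\hat v$. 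In particular this shows that the limit extracted in the proof of Theorem~\ref{MainThm} does not depend on the subsequence chosen there.

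For convergence of the full sequences I would first record that, by the a priori estimate~(\ref{apriori}) and the convergence of the data, $\{v_\ell\}$ and $\{\hat v_\ell\}$ are bounded in $L^\infty(0,T;H)\cap L^p(0,T;V_A)$, $\{K_\ell v_\ell\}$ is bounded in $L^\infty(0,T;V_B)$, $\{Av_\ell\}$ is bounded in $L^{p'}(0,T;V_A')$, and $\{\hat v_\ell'\}$ is bounded in $L^{p'}(0,T;V_A')+L^1(0,T;H)+L^\infty(0,T;V_B')$ via~(\ref{P_disc2}). On such bounded sets the weak topology of a reflexive separable Banach space and the weak* topology of the dual of a separable Banach space are metrizable, so all the notions of convergence appearing in Theorem~\ref{MainThm} are sequential. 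Then, given any subsequence of $\{v_\ell\}$, I would run the argument of the proof of Theorem~\ref{MainThm} on it to extract a further subsequence along which $v_\ell$ — together with $\hat v_\ell$, $K_\ell v_\ell$, $Av_\ell$ and $\hat v_\ell'$ — converges in the respective topologies to a limit solving~(\ref{abstract1}); by the uniqueness just proved this limit is the unique solution $v$ (respectively $Kv$, $Av$, $v'$, all of which are determined by $v$). Hence every subsequence of $\{v_\ell\}$ admits a further subsequence converging to $v$, and the subsequence principle, applied in each of the metrizable bounded sets, yields convergence of the whole sequence, and likewise of $\{\hat v_\ell\}$, $\{K_\ell v_\ell\}$ and $\{\hat v_\ell'\}$, in the sense of Theorem~\ref{MainThm}.

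I do not expect a genuine obstacle here; the only point that deserves care is the passage from ``every subsequence has a sub-subsequence with the same limit'' to convergence of the whole sequence, which is legitimate precisely because all the convergences take place on bounded — hence metrizable — subsets of duals of separable spaces or of reflexive separable spaces. For the assertion about $\hat v_\ell'$ one may also proceed directly, avoiding the metrizability remark: once $v_\ell\wconv v$ in $L^p(0,T;V_A)$, $Av_\ell\wconv Av$ in $L^{p'}(0,T;V_A')$ and $K_\ell v_\ell\wsconv Kv$ in $L^\infty(0,T;V_B)$ along the whole sequence, the scheme~(\ref{P_disc2}) in the form $\hat v_\ell'=f_\ell-Av_\ell-BK_\ell v_\ell$ together with $f_\ell\to f$ forces $\langle\hat v_\ell',w\rangle\to\langle v',w\rangle$ for every admissible test function $w$.
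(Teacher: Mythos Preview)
Your proposal is correct and follows essentially the same route as the paper: uniqueness is obtained by applying Theorem~\ref{ThmStability}(i) with identical data, and the paper then dismisses the full-sequence convergence with the phrase ``follows as usual by contradiction,'' which is precisely the subsequence principle you spell out in more detail (including the metrizability remark). One small slip: $\{\hat v_\ell\}$ need not be bounded in $L^p(0,T;V_A)$ since $v^0_\ell\in H$ only, but this is immaterial because Theorem~\ref{MainThm} asserts convergence of $\hat v_\ell$ only in $L^\infty(0,T;H)$.
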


\begin{proof}
We consider $u_0=\hat u_0\in V_B$, $v_0=\hat v_0\in H$ and $f=\hat f\in L^{p'}(0,T;V_A')+L^1(0,T;H)$ with corresponding solutions $v$ and $\hat v$. As $f-\hat f=0 \in L^1(0,T;H)$, the first assumption of Theorem \ref{ThmStability} is fulfilled, which shows
%

\[
\|v(t)-\hat{v}(t)\|_{H}^{2}+\frac{1}{\lambda}\left\|(Kv)(t)-(K\hat{v})(t)\right\|_{B}^{2}+\int_0^t \|(Kv)(s)-(K\hat{v})(s)\|_{B}^{2}\diff s
\leq0
\]
for all $t\in [0,T]$. This proves the uniqueness. Convergence of the whole sequences then follows as usual
by contradiction. \qed
\end{proof}

Finally, we aim to derive Lipschitz-type dependence on the kernel parameter $1/\lambda$, that is Lipschitz dependence on the average relaxation-time.

\begin{theorem}[Perturbation of $\lambda$]
Let the assumptions of Theorem \ref{MainThm} be fulfilled and let $v_{0}\in V$. For $\lambda>0$, let $K_{\lambda}$ denote the operator related to the kernel $k_{\lambda}(t)=\lambda e^{-\lambda t}$ and let $v_{\lambda}\in L^p(0,T;V_A)\cap L^\infty(0,T;H)$ denote the corresponding solution to problem (\ref{abstract1}). Moreover, assume that $v_\lambda\in L^2(0,T;V_B)$.
Then for all $\mu>0$ and corresponding solutions $v_\mu\in L^p(0,T;V_A)\cap L^\infty(0,T;H)$, there holds
\[\begin{split}
&\|(v_{\lambda}-v_{\mu})(t)\|_H^2  + \int_{0}^{t} \|(K_{\lambda}v_{\lambda}-K_{\mu}v_{\mu})    (s)\|_B^2 \diff s
\leq \frac{\lambda^2}{2}(1+\lambda^2 T^2)\left|\frac{1}{\lambda}-\frac{1}{\mu}\right|^2\|v_\lambda\|_{L^2(0,T;B)}^2
\end{split}\]
for almost all $t$. 

\end{theorem}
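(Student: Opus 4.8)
The plan is to compare the two solutions directly: subtract the two instances of \eqref{abstract1a}, which share the data $u_0$, $v_0$, $f$ but use the kernels $k_\lambda$, $k_\mu$. Writing $w_\lambda:=K_\lambda v_\lambda-u_0$ and $w_\mu:=K_\mu v_\mu-u_0$, the difference solves
\[
(v_\lambda-v_\mu)' + Av_\lambda - Av_\mu + B\bigl(w_\lambda-w_\mu\bigr) = 0,\qquad (v_\lambda-v_\mu)(0)=0 .
\]
The essential obstruction — and the reason a new integration-by-parts formula is needed — is that $K_\lambda v_\lambda-K_\mu v_\mu=w_\lambda-w_\mu$ is \emph{not} of the form $K(v_\lambda-v_\mu)$ for a single kernel, so neither Lemma~\ref{PartInt} nor \eqref{Kpositiv} can be applied to the memory term of the difference equation. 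I would therefore first establish an integration-by-parts identity for $\int_0^t\langle (v_\lambda-v_\mu)'+B(w_\lambda-w_\mu),v_\lambda-v_\mu\rangle\diff s$, exploiting that, by \eqref{GleichungK}, $w_\lambda'=\lambda(v_\lambda-w_\lambda)$ and $w_\mu'=\mu(v_\mu-w_\mu)$, hence $v_\lambda=w_\lambda+\tfrac1\lambda w_\lambda'$ and $v_\mu=w_\mu+\tfrac1\mu w_\mu'$. Carrying these two relations in parallel (rather than the single relation used for Lemma~\ref{PartInt}) and recalling that $B$ induces an inner product on $V_B$, the derivation is elementary; the hypotheses $v_0\in V$ and $v_\lambda\in L^2(0,T;V_B)$ are what supply the regularity needed to make it rigorous, the distributional derivative of $v_\lambda-v_\mu$ lying only in a sum of dual spaces.

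Next I would test the difference equation with $v_\lambda-v_\mu$, integrate over $(0,t)$, and discard $\int_0^t\langle Av_\lambda-Av_\mu,v_\lambda-v_\mu\rangle\diff s\ge0$ by the monotonicity of $A$. Applying the integration-by-parts formula and expanding $\langle B(w_\lambda-w_\mu),v_\lambda-v_\mu\rangle$ via the substitutions above, the memory term contributes $\int_0^t\|w_\lambda-w_\mu\|_B^2\diff s$ plus the nonnegative endpoint terms $\tfrac1{2\lambda}\|w_\lambda(t)\|_B^2$, $\tfrac1{2\mu}\|w_\mu(t)\|_B^2$, plus one \emph{defect term} proportional to $\tfrac1\lambda-\tfrac1\mu$ that quantifies the mismatch of the two kernels; it can be arranged to have the form $\bigl(\tfrac1\lambda-\tfrac1\mu\bigr)\int_0^t\langle B(w_\lambda-w_\mu)(s),w_\lambda'(s)\rangle\diff s$, with $w_\lambda'$ appearing precisely because only $v_\lambda$, not $v_\mu$, is controlled in $V_B$. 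Cauchy--Schwarz in the $B$-inner product then leaves
\[
\tfrac12\|(v_\lambda-v_\mu)(t)\|_H^2+\int_0^t\|(K_\lambda v_\lambda-K_\mu v_\mu)(s)\|_B^2\diff s\le\Bigl|\tfrac1\lambda-\tfrac1\mu\Bigr|\int_0^t\|(K_\lambda v_\lambda-K_\mu v_\mu)(s)\|_B\,\|w_\lambda'(s)\|_B\diff s+(\text{endpoint terms}).
\]

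Then I would close the estimate by Young's inequality, which turns the defect term into an absorbable multiple of $\int_0^t\|K_\lambda v_\lambda-K_\mu v_\mu\|_B^2\diff s$ plus $c\,|\tfrac1\lambda-\tfrac1\mu|^2\int_0^t\|w_\lambda'(s)\|_B^2\diff s$, any endpoint cross-terms being absorbed against the nonnegative endpoint terms produced above. Since $w_\lambda'=\lambda\bigl(v_\lambda-(K_\lambda v_\lambda-u_0)\bigr)$ by \eqref{GleichungK}, one has $\|w_\lambda'\|_{L^2(0,T;B)}^2=\lambda^2\|v_\lambda-(K_\lambda v_\lambda-u_0)\|_{L^2(0,T;B)}^2$; expanding this square, using that $\int_0^t\langle B(K_\lambda v_\lambda-u_0)(s),v_\lambda(s)\rangle\diff s\ge0$ by \eqref{Kpositiv}, and estimating $\|K_\lambda v_\lambda-u_0\|_{L^2(0,T;B)}^2\le\lambda^2T^2\|v_\lambda\|_{L^2(0,T;B)}^2$ by arguing as in Lemma~\ref{LemmaK} together with Hölder's inequality, I obtain
\[
\|w_\lambda'\|_{L^2(0,T;B)}^2\le\lambda^2\bigl(1+\lambda^2T^2\bigr)\|v_\lambda\|_{L^2(0,T;B)}^2 ,
\]
and tracking the constant in Young's inequality produces precisely the prefactor $\tfrac{\lambda^2}{2}(1+\lambda^2T^2)$. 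If at this stage an $\|v_\lambda-v_\mu\|_{L^\infty(0,T;H)}^2$-type quantity has been generated on the right-hand side, it is removed by first taking the supremum over $t$ and then absorbing, exactly as in the proof of Theorem~\ref{aprioriestimate}; this yields the asserted bound for almost all $t$. I expect the construction and rigorous justification of the integration-by-parts formula for the difference equation to be the main obstacle: the device underlying Lemma~\ref{PartInt} — rewriting the whole left-hand side through $(Kw)'=\lambda(w-(Kw-u_0))$ — is unavailable because $K_\lambda v_\lambda-K_\mu v_\mu$ is not governed by one kernel, so one must carry $w_\lambda'=\lambda(v_\lambda-w_\lambda)$ and $w_\mu'=\mu(v_\mu-w_\mu)$ simultaneously while coping with the weak-in-time regularity of $v_\lambda-v_\mu$, which is exactly why the extra assumptions $v_0\in V$ and $v_\lambda\in L^2(0,T;V_B)$ are imposed; once that formula is available, everything else is routine bookkeeping with Cauchy--Schwarz, Young's inequality, and the mapping properties of $K$ from Lemma~\ref{LemmaK}.
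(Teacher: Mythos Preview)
Your overall strategy matches the paper's: subtract the two equations, test with $v_\lambda-v_\mu$, drop the $A$-term by monotonicity, isolate a defect proportional to $\tfrac{1}{\lambda}-\tfrac{1}{\mu}$ coupling $K_\lambda v_\lambda-K_\mu v_\mu$ to $(K_\lambda v_\lambda)'$, then close with Young's inequality and the bound $\|(K_\lambda v_\lambda)'\|_{L^2(0,T;B)}^2\le\lambda^2(1+\lambda^2T^2)\|v_\lambda\|_{L^2(0,T;B)}^2$. Your identification of the defect term, your use of \eqref{Kpositiv} to drop the cross term when expanding $\|v_\lambda-(K_\lambda v_\lambda-u_0)\|_B^2$, and the final constant are exactly what the paper obtains.

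The genuine gap is the step you yourself flag as the main obstacle. You propose to make the integration-by-parts rigorous by adapting the device of Lemma~\ref{PartInt} (Strauss' theorem after rewriting everything through the kernel relation), carrying the two relations $w_\lambda'=\lambda(v_\lambda-w_\lambda)$ and $w_\mu'=\mu(v_\mu-w_\mu)$ in parallel. The paper states that this route does not go through with two distinct parameters, and instead uses a different mechanism: the centred Steklov average $S_h z(t)=\tfrac{1}{2h}\int_{t-h}^{t+h}z(s)\diff s$. One extends $v_\lambda,v_\mu$ by the constant $v_0$ to negative times (this is precisely where $v_0\in V$ is used), then observes from \eqref{GleichungK} that $S_h(v_\lambda-v_\mu)$ lies in $\mathscr{C}_w$ with values in $V_B$ --- regularity that $v_\lambda-v_\mu$ itself is not known to have. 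The splitting and integration-by-parts are performed on the smoothed functions, where everything is legitimate, and one passes to the limit $h\to0$ at Lebesgue points; this is why the final estimate holds only for \emph{almost all} $t$. Your sketch does not supply any such smoothing argument.

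Two minor points. First, the clean split of the memory term is $\tfrac{1}{\lambda}w_\lambda'-\tfrac{1}{\mu}w_\mu'=\tfrac{1}{\mu}(w_\lambda-w_\mu)'+\bigl(\tfrac{1}{\lambda}-\tfrac{1}{\mu}\bigr)w_\lambda'$, which produces the single nonnegative endpoint $\tfrac{1}{2\mu}\|(w_\lambda-w_\mu)(t)\|_B^2$ together with exactly your defect; your separate endpoints $\tfrac{1}{2\lambda}\|w_\lambda(t)\|_B^2$, $\tfrac{1}{2\mu}\|w_\mu(t)\|_B^2$ are algebraically equivalent only after further rearrangement, so the paper's split is the efficient one. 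Second, no $\|v_\lambda-v_\mu\|_{L^\infty(0,T;H)}$ term appears on the right after Young's inequality, so the supremum-and-absorb trick from Theorem~\ref{aprioriestimate} is not needed here.
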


\begin{proof} Taking the difference of the equations (\ref{abstract1a})
for $v_{\lambda}$ and $v_{\mu}$, respectively, and testing
with $v_{\lambda}-v_{\mu}$ leads to
\begin{equation}\begin{split}
&\int_{0}^{t}\Big\langle(v_{\lambda}-v_{\mu})'(s)+B((K_{\lambda}v_{\lambda})(s)-(K_{\mu}v_{\mu})(s)),(v_{\lambda}-v_{\mu})(s)\Big\rangle \diff s\\&=-\int_{0}^{t}\Big\langle Av_{\lambda}(s)-Av_{\mu}(s),(v_{\lambda}-v_{\mu})(s)\Big\rangle \diff s\leq0.\label{Stabil1}
\end{split}
\end{equation}

To deal with the integral on the left-hand side, we need to prove an integration-by-parts formula similar to (\ref{PartInt2}). Unfortunately, due to the fact that we have two different parameters $\lambda$ and $\mu$ instead of one, we are not able to apply the same method as in the proof of Lemma \ref{PartInt}. Therefore, we use another method of proof using the centered Steklov average.\footnote{We could have also used this method to prove Lemma \ref{PartInt}, but it only provides the integration-by-parts formula to hold on $(\alpha,\beta)$ for almost all $\alpha,\beta\in (0,T)$.}

In order to prove the integration-by-parts formula on $(0,t)$, we first have to extend the functions considered to the interval $(-\eta,0)$. We then prove the formula on an arbitrary interval $(\alpha,\beta)$ with $-\eta+h_0<\alpha<\beta<T-h_0$. At the end, instead of a fixed $\alpha$, we consider a sequence $\{\alpha_k\}$ with $\alpha_k<0$ and $\alpha_k\to 0$ as $k\to \infty$. If we considered a sequence of positive $\alpha_k$, we would not be able to identify the limit properly.

We fix $h_0>0$. The centered Steklov average for a function $z\in L^p(-\eta,T;X)$, $X$ being an arbitrary Banach space, is defined by
\begin{equation}\label{Steklov}
(S_h z)(t):= \frac{1}{2h} \int_{t-h}^{t+h} z(s)\diff s ,
\end{equation}
where $0<h<h_0$, $t\in [-\eta+h_0, T-h_0]$.
It is well-known (see Diestel and Uhl \cite[Thm.~9, p.~49]{DiestelUhl}) that $ (S_hz)(t)\to z(t)$ in $X$ for almost all $t\in (-\eta+h_0,T-h_0)$. In addition, it is easy to show that $S_h z\in L^p(-\eta+h_0,T-h_0;X)$ as well as, using Fubini's theorem, $\|S_hz\|_{L^p(-\eta+h_0,T-h_0;X)}\leq \|z\|_{L^p(-\eta,T;X)}$ and thus $S_hz\to z$ in $L^p(-\eta+h_0,T-h_0;X)$.

First, note that both $v_{\lambda}$ and $v_{\mu}$ are in $\mathscr{C}_{w}([0,T];H)$ and take the same value $v_{0}$ at $t=0$. Hence, the difference $v_{\lambda}-v_{\mu}$ fulfills the initial condition $(v_{\lambda}-v_{\mu})(0)=0$. We assume both $v_\lambda$ and $v_\mu$ to be extended by $v_0\in V$ for $t<0$ and thus the difference to be extended by $0$ for $t<0$. For fixed $\eta<0$, we conclude that $v_{\lambda}-v_{\mu}\in L^p(-\eta,T;V_A)\cap \mathscr{C}_{w}([-\eta,T];H)$ and similarly $K_{\lambda}v_{\lambda}-K_{\mu}v_{\mu}\in  \mathscr{C}_{w}([-\eta,T];V_{B})$. The relation \eqref{GleichungK} still holds for both $v_\lambda$ and $v_\mu$ for almost all $t\in(-\eta,0)$.

As $v_{\lambda}-v_{\mu}\in L^p(-\eta,T;V_A)\cap \mathscr{C}_{w}([-\eta,T];H)$, it is easy to show that $S_h(v_{\lambda}-v_{\mu})$ is bounded in $L^p(-\eta +h_0,T-h_0;V_A)\cap L^\infty(-\eta+h_0,T-h_0;H)$. Therefore, there exists a subsequence, again denoted by $h$, such that $S_h (v_{\lambda}-v_{\mu}) \wsconv v_{\lambda}-v_{\mu}$ in $L^p(-\eta+h_0,T-h_0;V_A)\cap L^\infty(-\eta+h_0,T-h_0;H)$. In view of (\ref{GleichungK}), we have
\begin{equation} \label{Equation_S_h}
  \begin{split}
    &(S_h  (v_{\lambda}-v_{\mu}))(t)= \frac{1}{2h} \left(\left(\frac{1}{\lambda} K_\lambda v_\lambda -\frac{1}{\mu} K_\mu v_\mu\right)(t+h)-\left(\frac{1}{\lambda} K_\lambda v_\lambda -\frac{1}{\mu} K_\mu v_\mu\right)(t-h)\right) \\
    &\hspace{8cm} \phantom{\int}+(S_h K_\lambda v_\lambda) (t) -(S_h K_\mu v_\mu)(t)
  \end{split}
\end{equation}
such that $K_\lambda v_\lambda, K_\mu v_\mu\in \mathscr{C}_w([-\eta,T];V_B)$ provide $S_h(v_\lambda-v_\mu)\in \mathscr{C}_w([-\eta+h_0,T-h_0];V_B)$. Thus -- in contrast to the difference $v_\lambda-v_\mu$ itself -- the Steklov average $S_h (v_\lambda-v_\mu)$ is an element of $L^p(-\eta+h_0,T-h_0;V_A)\cap \mathscr{C}_w([-\eta+h_0,T-h_0];V_B)$, which provides the regularity needed.

We find for all $\alpha, \beta$ with $-\eta+h_0<\alpha<\beta<T-h_0$
\begin{equation} \label{split_S_h}
  \begin{split}
    &\int_{\alpha}^{\beta}\Big\langle(v_{\lambda}-v_{\mu})'(t)+B((K_{\lambda}v_{\lambda})(t)-(K_{\mu}v_{\mu})(t)),(v_{\lambda}-v_{\mu})(t)\Big\rangle \diff t \\
    & =\int_{\alpha}^{\beta}\Big\langle(v_{\lambda}-v_{\mu})'(t)+B((K_{\lambda}v_{\lambda})(t)-(K_{\mu}v_{\mu})(t)),(v_{\lambda}-v_{\mu})(t)-(S_{h}(v_{\lambda}-v_{\mu}))(t)\Big\rangle \diff t\\
    & \phantom{MMM}+\int_{\alpha}^{\beta}\Big\langle (v_{\lambda}-v_{\mu})'(t)+B((K_{\lambda}v_{\lambda})(t)-(K_{\mu}v_{\mu})(t))\\&\phantom{MMMMMMMM}- \left(S_h\left((v_{\lambda}-v_{\mu})'+B (K_{\lambda}v_{\lambda}-K_{\mu}v_{\mu})\right)\right)(t), (S_{h}(v_{\lambda}-v_{\mu}))(t)\Big\rangle \diff t\\
    & \phantom{MMM}+\int_{\alpha}^{\beta}\Big\langle \left(S_h (v_{\lambda}-v_{\mu})'\right)(t)+B(S_h(K_{\lambda}v_{\lambda}-K_{\mu}v_{\mu}))(t), (S_{h}(v_{\lambda}-v_{\mu}))(t)\Big\rangle \diff t.
  \end{split}
\end{equation}
The first term on the right-hand side tends to zero as $h\to 0$ since $$(v_\lambda-v_\mu)'+B(K_\lambda v_\lambda-K_\mu v_\mu)=-(Av_\lambda-Av_\mu)\in L^{p'}(-\eta+h_0,T-h_0; V_A')+L^1(-\eta+h_0,T-h_0;H)$$ and $$S_h (v_\lambda-v_\mu)\wsconv v_\lambda-v_\mu$$ in  $L^p(-\eta+h_0,T-h_0; V_A)\cap L^\infty(-\eta+h_0,T-h_0;H)$ as $h\to 0$, the second one since $\{S_h(v_\lambda-v_\mu)\}$ is bounded in $L^p(-\eta+h_0,T-h_0;V_A)\cap L^\infty(-\eta+h_0,T-h_0;H)$ and $$ S_h \left((v_\lambda-v_\mu)' + B(K_\lambda v_\lambda-K_\mu v_\mu)\right)\to (v_\lambda-v_\mu)'+B(K_\lambda v_\lambda-K_\mu v_\mu)$$ in $L^{p'}(-\eta+h_0,T-h_0;V_A')+L^1(-\eta+h_0,T-h_0;H)$ as $h\to 0$. The latter follows from the linearity of $S_h$.

%
%

Due to the regularity properties of $S_h(v_\lambda-v_\mu)$, we are allowed to split the last integrand in $\eqref{split_S_h}$. We use \[v_{\lambda}-v_{\mu} = \frac{1}{\lambda}(K_{\lambda}v_{\lambda})'-\frac{1}{\mu} (K_{\mu}v_{\mu})' +  K_{\lambda}v_{\lambda}- K_{\mu}v_{\mu}\] as well as $S_h (z')= (S_h z)'$ for any integrable function $z$ on $(\alpha, \beta)\subset (-\eta+h_0,T-h_0)$. We rewrite the third term as
\[\begin{split}
  &\int_{\alpha}^{\beta}\Big\langle \left(S_h (v_{\lambda}-v_{\mu})'\right)(t)+B(S_h(K_{\lambda}v_{\lambda}-K_{\mu}v_{\mu}))(t),(S_{h}(v_{\lambda}-v_{\mu}))(t)\Big\rangle \diff t\\
  &= \int_{\alpha}^{\beta}\Big\langle \left(S_h (v_{\lambda}-v_{\mu})'\right)(t), (S_{h}(v_{\lambda}-v_{\mu}))(t)\Big\rangle \diff t\\
  &\hspace{1cm}+\int_{\alpha}^{\beta}\Big\langle B(S_h(K_{\lambda}v_{\lambda}-K_{\mu}v_{\mu}))(t), (S_{h} (K_{\lambda}v_{\lambda} -K_{\mu}v_{\mu}))(t) \Big\rangle \diff t\\
  &\hspace{2cm}+\int_{\alpha}^{\beta}\Big\langle B(S_h(K_{\lambda}v_{\lambda}-K_{\mu}v_{\mu}))(t), \frac{1}{\lambda}\left(S_{h} (K_{\lambda}v_{\lambda})'\right)(t)-\frac{1}{\mu}\left(S_{h}(K_{\mu}v_{\mu})'\right)(t)\Big\rangle \diff t\\
  &= \frac{1}{2}\|(S_h (v_{\lambda}-v_{\mu}))(\beta)\|_H^2 -\frac{1}{2} \|(S_h (v_{\lambda}-v_{\mu}))(\alpha)\|_H^2 + \int_{\alpha}^{\beta} \|(S_h   (K_{\lambda}v_{\lambda}-K_{\mu}v_{\mu}) ) (t)\|_B^2 \diff t\\
  &\hspace{1cm}+ \frac{1}{\mu}\int_{\alpha}^{\beta}\Big\langle B(S_h(K_{\lambda}v_{\lambda}-K_{\mu}v_{\mu}))(t), \left(S_{h} (K_{\lambda}v_{\lambda}-K_{\mu}v_{\mu})'\right)(t)\Big\rangle \diff t\\
  &\hspace{2cm}+ \Big(\frac{1}{\lambda}-\frac{1}{\mu}\Big)\int_{\alpha}^{\beta}\Big\langle B(S_h(K_{\lambda}v_{\lambda}-K_{\mu}v_{\mu}))(t), \left(S_{h} (K_{\lambda}v_{\lambda})'\right)(t)\Big\rangle \diff t\\
  &\geq \frac{1}{2}\|(S_h (v_{\lambda}-v_{\mu}))(\beta)\|_H^2 -\frac{1}{2} \|(S_h (v_{\lambda}-v_{\mu}))(\alpha)\|_H^2 + \int_{\alpha}^{\beta} \|(S_h   (K_{\lambda}v_{\lambda}-K_{\mu}v_{\mu}) ) (t)\|_B^2 \diff t\\
  &\hspace{1cm}+ \frac{1}{2\mu}\|(S_h(K_{\lambda}v_{\lambda}-K_{\mu}v_{\mu}))(\beta)\|_B^2 - \frac{1}{2\mu}\| (S_{h} (K_{\lambda}v_{\lambda}-K_{\mu}v_{\mu}))(\alpha)\|_B^2\\
  &\hspace{2cm}- \Big|\frac{1}{\lambda}-\frac{1}{\mu}\Big| \int_\alpha^\beta\| (S_h(K_{\lambda}v_{\lambda}-K_{\mu}v_{\mu}))(t)\|_B \;\!\| \left(S_{h} (K_{\lambda}v_{\lambda})'\right)(t)\|_B \diff t.\\
\end{split}\]
Taking the limit $h\to \infty$ and employing (\ref{Stabil1}) results in

\begin{equation}\label{Stabil2}
  \begin{split}
    &\frac{1}{2}\|(v_{\lambda}-v_{\mu})(\beta)\|_H^2  + \int_{\alpha}^{\beta} \|(K_{\lambda}v_{\lambda}-K_{\mu}v_{\mu})    (t)\|_B^2 \diff t + \frac{1}{2\mu}\|(K_{\lambda}v_{\lambda}-K_{\mu}v_{\mu})(\beta)\|_B^2 \\
    &\leq \frac{1}{2} \|(v_{\lambda}-v_{\mu})(\alpha)\|_H^2 +  \frac{1}{2\mu}\| (K_{\lambda}v_{\lambda}-K_{\mu}v_{\mu})(\alpha)\|_B^2\\
    &\hspace{1cm}+ \Big|\frac{1}{\lambda}-\frac{1}{\mu}\Big| \int_\alpha^\beta\| (K_{\lambda}v_{\lambda}-K_{\mu}v_{\mu})(t)\|_B\;\! \|  (K_{\lambda}v_{\lambda})'(t)\|_B \diff t
  \end{split}
\end{equation}
for almost all $\alpha,\beta$, namely for those that are Lebesgue points of $v_{\lambda}-v_{\mu}$ and $K_{\lambda}v_{\lambda}-K_{\mu}v_{\mu}$. Finally, we consider a sequence $\{\alpha_k\}$ with $\alpha_k<0$ and $\alpha_k\to 0$ as $k\to \infty$. We remark that $v_{\lambda}-v_{\mu}\in \mathscr{C}_w([-\eta,T];H)$ and $K_{\lambda}v_{\lambda}-K_{\mu}v_{\mu}\in \mathscr{C}_w([-\eta,T];V_B)$. Due to the choice of the extension for $t<0$, equation (\ref{Stabil2}) shows for $\beta=t$ that
\[
  \begin{split}
    &\frac{1}{2}\|(v_{\lambda}-v_{\mu})(t)\|_H^2  + \int_{0}^{t} \|(K_{\lambda}v_{\lambda}-K_{\mu}v_{\mu})    (s)\|_B^2 \diff s + \frac{1}{2\mu}\|(K_{\lambda}v_{\lambda}-K_{\mu}v_{\mu})(t)\|_B^2 \\
    &\leq \Big|\frac{1}{\lambda}-\frac{1}{\mu}\Big| \int_0^t\| (K_{\lambda}v_{\lambda}-K_{\mu}v_{\mu})(s)\|_B \;\!\|  (K_{\lambda}v_{\lambda})'(s)\|_B \diff s\\
  \end{split}
\]
for almost all $t\in (0,T)$ and thus
\[\begin{split}
  &\frac{1}{2}\|(v_{\lambda}-v_{\mu})(t)\|_H^2  + \frac{1}{2}\int_{0}^{t} \|(K_{\lambda}v_{\lambda}-K_{\mu}v_{\mu})    (s)\|_B^2 \diff s + \frac{1}{2\mu}\|(K_{\lambda}v_{\lambda}-K_{\mu}v_{\mu})(t)\|_B^2 \\
  &\leq \frac{1}{2}\Big|\frac{1}{\lambda}-\frac{1}{\mu}\Big|^2 \|(K_{\lambda}v_{\lambda})'\|_{L^2(0,T;B)}^2\\
  &\leq \frac{\lambda^2}{2}\Big|\frac{1}{\lambda}-\frac{1}{\mu}\Big|^2\left( \|v_\lambda\|_{L^2(0,T;B)}^2 + \|K_{\lambda}v_{\lambda}-u_0\|_{L^2(0,T;B)}^2\right).
\end{split}\]
It is easy to show that $\|K_{\lambda}v_{\lambda}-u_0\|_{L^2(0,T;B)}^2\leq \lambda^2 T^2\|v_\lambda\|_{L^2(0,T;B)}^2$, which proves the assertion.
\qed

\end{proof}

\end{document}